\DeclareMathOperator{\comod}{-comod}
\DeclareMathOperator{\End}{End}
\DeclareMathOperator{\Frac}{Frac}
\DeclareMathOperator{\Hom}{Hom}
\DeclareMathOperator{\id}{id}
\DeclareMathOperator{\Ind}{Ind}
\DeclareMathOperator{\ind}{ind}
\DeclareMathOperator{\Ker}{Ker}
\DeclareMathOperator{\cmod}{-mod}
\DeclareMathOperator{\ord}{ord}
\DeclareMathOperator{\pro}{pro}
\DeclareMathOperator{\Spec}{Spec}
\DeclareMathOperator{\PU}{PU}
\DeclareMathOperator{\rat}{rat}
\DeclareMathOperator{\un}{un}
\newcommand{\cA}{\mathcal{A}}
\newcommand{\cF}{\mathcal{F}}
\newcommand{\cJ}{\mathcal{J}}
\newcommand{\cO}{\mathcal{O}}
\newcommand{\fa}{\mathfrak{a}}
\newcommand{\fb}{\mathfrak{b}}
\newcommand{\fg}{\mathfrak{g}}
\newcommand{\fh}{\mathfrak{h}}
\newcommand{\fk}{\mathfrak{k}}
\newcommand{\fl}{\mathfrak{l}}
\newcommand{\fm}{\mathfrak{m}}
\newcommand{\fq}{\mathfrak{q}}
\newcommand{\fsl}{\mathfrak{sl}}
\newcommand{\ft}{\mathfrak{t}}
\newcommand{\fu}{\mathfrak{u}}
\newcommand{\bC}{\mathbb{C}}
\newcommand{\bF}{\mathbb{F}}
\newcommand{\bQ}{\mathbb{Q}}
\newcommand{\bR}{\mathbb{R}}
\newcommand{\bZ}{\mathbb{Z}}
\def\bfb{\boldsymbol{\mathfrak{b}}}
\def\bfg{\boldsymbol{\mathfrak{g}}}
\def\bfq{\boldsymbol{\mathfrak{q}}}
\def\bfsl{\boldsymbol{\mathfrak{sl}}}
\theoremstyle{plain}
\newtheorem{thm}{Theorem}[section]
\newtheorem{cor}[thm]{Corollary}
\newtheorem{lem}[thm]{Lemma}
\newtheorem{prop}[thm]{Proposition}
\theoremstyle{definition}
\newtheorem{defn}[thm]{Definition}
\newtheorem{ex}[thm]{Example}
\newtheorem{note}[thm]{Notation}
\newtheorem{rem}[thm]{Remark}
\newtheorem{var}[thm]{Variant}
\newcommand*{\relrelbarsep}{.386ex}
\newcommand*{\relrelbar}{%
	\mathrel{%
		\mathpalette\@relrelbar\relrelbarsep
	}%
}
\newcommand*{\@relrelbar}[2]{%
	\raise#2\hbox to 0pt{$\m@th#1\relbar$\hss}%
	\lower#2\hbox{$\m@th#1\relbar$}%
}
\providecommand*{\rightrightarrowsfill@}{%
	\arrowfill@\relrelbar\relrelbar\rightrightarrows
}
\providecommand*{\xrightrightarrows}[2][]{%
	\ext@arrow 0359\rightrightarrowsfill@{#1}{#2}%
}
\begin{document}
	\title{Integral models of Harish-Chandra modules of the finite covering groups of PU(1,1)}
	\author{Takuma Hayashi\thanks{Graduate School of Mathematical Sciences, The University of Tokyo, 3-8-1 Komaba Meguro-ku Tokyo 153-8914, Japan, htakuma@ms.u-tokyo.ac.jp}}
	\date{}
	\maketitle
	\begin{abstract}
		We compute integral models of real and cohomological induction for finite covering groups of PU(1,1).
	\end{abstract}
	\section{Introduction}
	In representation theory of real reductive Lie groups over the complex numbers, the functor $I^{\fg,K}_{\fq,M}$ plays an important role. For example, let $G$ be a real reductive Lie group, and $(\fg,K)$ be its Harish-Chandra pair. If $(\fq,M)$ is a real parabolic subpair of $(\fg,K)$, $I^{\fg,K}_{\fq,M}$ is an algebraic analog of the real parabolic induction for $G$ (\cite{MR1330919} Proposition 11.47). If $(\fq,M)$ is a parabolic subpair associated to a $\theta$-stable parabolic subalgebra $\fq$, one can define cohomological induction and $A_\fq(\lambda)$-modules from $I^{\fg,K}_{\fq,M}$ (\cite{MR1330919} (5.3b), (5.6)).
	
	Fabian Januszewski introduced the functor $I^{\fg,K}_{\fq,M}$ over fields of characteristic 0 in \cite{MR3770183}. In \cite{MR4007195} and \cite{MR3853058}, the functor $I^{\fg,K}_{\fq,M}$ was constructed for maps $(\fq,M)\to(\fg,K)$ over an arbitrary commutative ring. We also gave abstract study of the flat base change theorem of the derived functor $\bR I^{\fg,K}_{\fq,M}$ in \cite{MR3853058}. It shows that $\bR I^{\fg,K}_{\fq,M}$ produces rational and integral models of $A_\fq(\lambda)$-modules under mild conditions. The rational models have applications to special values of automorphic $L$-functions and Rankin-Selberg $L$-functions (\cite{MR3937337}, \cite{1604.04253}). We should be able to apply integral models as well to strengthen the rationality results. The use of integral models for special values of $L$-functions was also suggested in \cite{MR3970997}. 
	
	Currently, no nontrivial computational results on $I^{\fg,K}_{\fq,M}$ over commutative rings are known. The aim of this paper is to compute some modules obtained by the functor $I^{\fg,K}_{\fq,M}$ over commutative rings explicitly. More specifically, we study the following three things in an easy setting:
	\begin{itemize}
		\item Compute $I^{\fg,K}_{\fq,M}$ when the flat base change theorem holds.
		\item See when an induced module is defined over a smaller ring.  
		\item Find an interesting counterexample to the flat base change theorem.
	\end{itemize}
	However, it appears difficult to compute $I^{\fg,K}_{\fq,M}$ directly from our construction in \cite{MR4007195}. The main issue is representation theory of $K$. In the theory of $(\fg,K)$-modules over the complex numbers, it is crucial that algebraic representations of compact Lie groups or corresponding complex reductive algebraic groups satisfy complete reducibility. Let $(\fg,K)$ be a Harish-Chandra pair over the complex numbers. Suppose that $K$ is reductive. Choose a maximal compact subgroup $K_\bR$ of $K$. The following results are consequences of complete reducibility:
	\begin{itemize}
		\item The coordinate ring of $K$ is cosemisimple (\cite{MR0252485} Chapter XIV and XV).
		\item Every $K$-module is both injective and projective (\cite{MR1330919} Lemma 2.4).
		\item Irreducible representations of $K$ form a family of projective generators of the category of $K$-modules.
		\item The $K_\bR$-finite distributions on $K_\bR$ form a convolution algebra $R(K_\bR)$ which is called the Hecke algebra. This algebra is approximately unital. The category of locally finite representations of $K_\bR$ is canonically isomorphic to that of approximately unital $R(K_\bR)$-modules. More generally, to $(\fg,K)$ is attached an approximately unital algebra $R(\fg,K)$ which is commonly referred to as the Hecke algebra. The category of $(\fg,K)$-modules is isomorphic to that of approximately unital $R(\fg,K)$-modules (\cite{MR1330919} I.4 Theorem). Thanks to this fact, $(\fg,K)$-modules are treated like modules over rings in \cite{MR1330919}. For instance, $I^{\fg,K}_{\fq,M}$ can be constructed as a Hom-type right adjoint functor between modules.
		\item We can define the dual Zuckerman functor $\Pi$ by a base-change type construction (see \cite{MR1330919} (2.8) and \cite{1604.04253} 1.4.2).
		\item We can construct the so-called standard projective and injective resolutions of $(\fg,K)$-modules from the Koszul complex (\cite{MR1330919} Theorem 2.122). It enables us to compute the derived functor modules explicitly.
	\end{itemize}
	The highest weight theory is also important for deeper study.  In this theory, weight space decomposition of representations of tori and the structure of root systems play a fundamental role.
	
	Though we cannot expect such results for reductive algebraic groups over fields of positive characteristics in general, representation theory of diagonalizable groups over an arbitrary commutative ring has quite similar features to that over the complex numbers. See \cite{MR2015057} I.2.11 for example. In Section 3, we reformulate it in terms of the Hecke algebra. We also introduce the relative Hecke algebra in Section 4. As an application, we explicitly compute integral models of complexified Harish-Chandra pairs of finite coverings of $\PU(1,1)$ in Section 5. There are many choices of integral models. If one assumes that an integral model is compatible with the real structure of the Harish-Chandra pair of coverings groups of $\PU(1,1)$, (a standard form of) the unitary group $\mathrm{U}(1)$ will be a typical example of $K$. In this paper, we adopt the split torus $T^1$ for a candidate of $K$ instead of $\mathrm{U}(1)$ since $\mathrm{U}(1)$ is not diagonalizable. Let $\bZ$ denote the ring of integers. The theories over $\bZ\left[1/2\right]$ for these two choices are related by the base change to $\bZ\left[1/2,\sqrt{-1}\right]$ (\cite{MR3853058} Variant 3.2.12, Variant 3.2.13). In Section 5, we introduce the notion of split integral forms of the Harish-Chandra pairs of finite covering groups of $\PU(1,1)$ which are Harish-Chandra pairs over $\bZ$ (Definition \ref{defn:splitpair}). We classify them (Theorem \ref{thm:classification}). We also construct models of parabolic subpairs. Then we compute the induction $I^{\fg,K}_{\fq,M}$ of integral modules which are free of rank 1 over $\bZ$. We prove that in our setting, similar results to the case of the complex numbers are obtained for models of $\theta$-stable Borel subalgebras, where $\theta$ is the Cartan involution (Theorem \ref{thm:thetastableinduction}). If we invert some integers, we also show that similar results to the case of the complex numbers hold for models of real parabolic subpairs (Proposition \ref{prop:compactpicture}). We also add $\sqrt{-1}$ to the base ring to study the conjugation of models of principal series representations. In particular, we give a criterion when the induced modules descend (Proposition \ref{prop:descent}, Variant \ref{var:descent1}, Variant \ref{var:descent2}). The main theorem is on certain integral models of real parabolic supairs:
	\begin{thm}[Theorem \ref{thm:partlyvanishing}, Variant \ref{var:partlyvanishing}]
		There exist $\bZ$-forms of irreducible highest and lowest weight representations of the finite covering groups of $\PU(1,1)$ which are obtained from principal series type induction.
	\end{thm}
	This result asserts that on the course of induction, some part of the real parabolic induction vanishes when we work over the integers. Roughly speaking, this happens because the singularity of $\mathrm{U}(1)$ at $2$ shifts to a kind of singularity in representation theory by avoiding it. A precise factor is that the Iwasawa decomposition fails in our setting in the absence of $1/2$. This theorem gives a nontrivial counterexample to the flat base change theorem when the condition (i) of \cite{MR3853058} Theorem 3.1.7 fails. At the end of this paper, we will remark that similar computations work in the contraction setting of Joseph Bernstein, Nigel Higson, and Eyal Subag (\cite{10.1093/imrn/rny147}, \cite{10.1093/imrn/rny146}). This gives a contraction family of principal series representations.
	\renewcommand{\abstractname}{Acknowledgments}
	\begin{abstract}
		The author is grateful to his advisor Professor Hisayosi Matumoto for helpful discussions, suggestions, and careful reading of drafts of this paper.
		
		Thanks also to Masatoshi Kitagawa for comments on principal series representations of covering groups of SU(1,1).
		
		He also thanks the referee for suggesting to study the conjugation action.
		
		This work was supported by JSPS Kakenhi Grant Number JP15J06457 and the Program for Leading Graduate Schools, MEXT, Japan.
	\end{abstract}
	\section{Notation}
	Let $\bZ_{>0}\subset\bZ$ be the set of positive integers. For a prime number (resp.\ a Gaussian prime) $\ell$, we denote the $\ell$-adic valuation of $\bZ$ (resp.\ $\bZ\left[\sqrt{-1}\right]$) by $\mathrm{ord}_\ell$. 
	
	The field of rational (resp.\ complex) numbers will be denoted by $\bQ$ (resp.\ $\bC$). For an integral domain $k$, let $\Frac(k)$ be the field of fractions of $k$. If $k$ is the polynomial ring $\bC\left[z\right]$, write $\bC(z)=\Frac\bC\left[z\right]$.
	
	For a commutative ring $k$, let $k^\times$ denote the group of units of $k$.
	
	For an object $X$ of a category, we refer to its identity map as $\id_X$.
	
	We fix a commutative ring $k$ in Section 2-4. For $k$-modules $V$ and $W$, let $\Hom_k(V,W)$ denote the $k$-module of $k$-homomorphisms $V\to W$. When $V=W$ (resp.\ $W=k$), we write $\Hom_k(V,V)=\End_k(V)$ (resp.\ $\Hom_k(V,k)=V^\ast$). Similar notations to modules or comodules over other algebraic objects will be used like $\Hom_{\fg,K}(V,W)$.
	
	For a $k$-algebra $\cA$, we denote the multiplication map $\cA\otimes_k\cA\to \cA$ by $m_\cA$.
	
	Let $C$ be a flat coalgebra over $k$. Let us denote the comultiplication and the counit of $C$ by $\Delta_C$ and $\epsilon_C$ respectively. For a (right) $C$-comodule $V$, let us denote the coaction $V\to V\otimes_k C$ on $V$ by $\rho_V$. We put the natural structure of a $k$-algebra on $C^\ast$ (see \cite{MR0252485} Proposition 1.1.1). The category of $C$-comodules (resp.\ left $C^\ast$-modules) will be denoted by $C\comod$ (resp.\ $C^\ast\cmod$). Recall that we have a natural functor
	\begin{equation}\label{eq:comodtorationalmod}
	C\comod\to C^\ast\cmod
	\end{equation}
	(\cite{MR0252485} Proposition 2.1.1).
	
	For a flat affine group scheme $K$ over $k$, we denote the category of representations of $K$ by $K\cmod$. For a map $M\to K$ of flat affine group schemes, we denote the right adjoint functor to the forgetful functor $K\cmod\to M\cmod$ by $\Ind^K_M$ (see \cite{MR2015057} I.3.3 and I.3.4).
	
	For a flat affine group scheme over $k$, its Lie algebra will be denoted by the corresponding German letter.
	
	Recall that we introduced the notions of Harish-Chandra pairs $(\cA,K)$, $(\cA,K)$-module, and their weak analogs over commutative rings in \cite{MR4007195} Section 2.1. In this paper, we remove their differential graded structures for simplicity (\cite{MR4007195} Remark 1.2.4). We extended the notions of Harish-Chandra pairs and their modules in \cite{MR3853058} Section 1.1. In this paper, we adopt the definitions in \cite{MR3853058} Section 1.1. Taking it into account, we denote the category of weak $(\cA,K)$-modules (resp.\ $(\cA,K)$-modules) by $(\cA,K)\cmod_w$ (resp.\ $(\cA,K)\cmod$). We write $\cJ_{\cA,K}$ for the canonical embedding $(\cA,K)\cmod\hookrightarrow(\cA,K)\cmod_w$. When $\cA$ is the enveloping algebra of a Lie algebra $\fg$, we write $(\fg,K)$, $(\fg,K)\cmod$ etc.
	
	Let $f=(f_a,f_g):(\fh,L)\to(\fg,K)$ be a map of Harish-Chandra pairs over $k$ in the sense of \cite{MR4007195} Section 2.1. Remark that this makes sense even for the refined definition of Harish-Chandra pairs. Let $\cF^{\fg,K}_{\fh,L}:(\fg,K)\cmod\to(\fh,L)\cmod$ be the forgetful functor (\cite{MR4007195} Proposition 2.2.1). We will denote the right adjoint functor of $\cF^{\fg,K}_{\fh,L}$ by $I^{\fg,K}_{\fh,L}$ (\cite{MR3853058} Lemma 1.1.9). If $L=K$ and $f_g=\id_K$, we write
	\[\ind^\fg_\fh\coloneqq P^{\fg,K}_{\fh,K}\]
	\[\pro^\fg_\fh\coloneqq I^{\fg,K}_{\fh,K}\]
	(see \cite{MR4007195} Theorem 1.2.2).
	
	For a Harish-Chandra pair $(\fg,K)$ over $k$ and a (flat) $k$-algebra $k'$, we will denote the base change to $k'$ (\cite{MR3853058} Proposition 3.1.1, Remark 3.1.2) by the same symbol $(\fg,K)$ in Section 4 and 5 if there is no risk of confusion.
	
	\section{The Hecke algebras of diagonalizable groups}
	Let $\Lambda$ be an additive group, $k\left[\Lambda\right]$ be its group algebra over $k$, and $\{t^\lambda\}_{\lambda\in\Lambda}$ be the standard basis of $k\left[\Lambda\right]$. Then $k\left[\Lambda\right]$ is a commutative and cocommutative Hopf algebra over $k$ for $\Delta_{k\left[\Lambda\right]}(t^\lambda)=t^\lambda\otimes t^\lambda$ and $\epsilon_{k\left[\Lambda\right]}(t^\lambda)=1$. Write $T=\Spec k\left[\Lambda\right]$. The flat affine group scheme $T$ is called a diagonalizable group over $k$ in \cite{MR0212024}.
	
	For each element $\lambda\in\Lambda$, denote the subcomodule $kt^\lambda $ of the regular comodule $k\left[\Lambda\right]$ by $k_\lambda$, and let $p_\lambda\in k\left[\Lambda\right]^\ast$ be the projection to the $\lambda$-component, namely,
	\[p_\lambda(t^{\lambda'})=\left\{ \begin{array}{ll}
	1& (\lambda=\lambda') \\
	0& (\lambda\neq\lambda').
	\end{array} \right.\]
	Under the canonical identification $k\left[\Lambda\right]^\ast\cong\prod_{\lambda\in\Lambda} kp_\lambda$, $k\left[\Lambda\right]^\ast$ is the product of copies of the algebra $k$ as a $k$-algebra. We will denote elements of $k\left[\Lambda\right]^\ast$ by formal sums $\sum c_\lambda p_\lambda$.
	
	Let $V$ be a $k\left[\Lambda\right]$-comodule, and $v\in V$. Write $\rho_V(v)=\sum v_\lambda\otimes t^\lambda$. Recall that $V$ is a $k\left[\Lambda\right]^\ast$-module by \eqref{eq:comodtorationalmod}. Unwinding the definitions, we then have $p_\lambda v=v_\lambda$.
	\begin{lem}[\cite{MR2015057} I.2.11]
		For $\lambda,\lambda'\in\Lambda$ we have
		\[\Hom_{k\left[\Lambda\right]}(k_\lambda,k_{\lambda'})=\left\{ \begin{array}{ll}
		k& (\lambda=\lambda') \\
		0& (\lambda\neq\lambda').
		\end{array} \right.\]
	\end{lem}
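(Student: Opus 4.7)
The plan is to unpack the comodule-homomorphism condition directly on the one-dimensional comodules $k_\lambda$ and $k_{\lambda'}$, using the explicit coaction $t^\lambda\mapsto t^\lambda\otimes t^\lambda$ inherited from $C=k[\Lambda]$ and the fact that $\{t^\mu\}_{\mu\in\Lambda}$ is a $k$-basis of $C$.

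First I would fix an arbitrary $k$-linear map $f\colon k_\lambda\to k_{\lambda'}$. Since the source and target each have the single generator $t^\lambda$ and $t^{\lambda'}$ over $k$, the map is determined by a unique scalar $c\in k$ with $f(t^\lambda)=c\,t^{\lambda'}$. So the only content of the lemma is to decide for which $c$ the map $f$ is $C$-colinear.

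Next I would write down the colinearity condition $(f\otimes\id_C)\circ\rho_{k_\lambda}=\rho_{k_{\lambda'}}\circ f$ on the generator $t^\lambda$. The left-hand side gives
\[(f\otimes\id_C)(t^\lambda\otimes t^\lambda)=c\,t^{\lambda'}\otimes t^\lambda,\]
while the right-hand side gives
\[\rho_{k_{\lambda'}}(c\,t^{\lambda'})=c\,t^{\lambda'}\otimes t^{\lambda'}.\]
Equality therefore reads $c\,t^{\lambda'}\otimes(t^\lambda-t^{\lambda'})=0$ inside $k_{\lambda'}\otimes_k C$. If $\lambda=\lambda'$ this is automatic and every $c\in k$ yields a comodule map, so $\Hom_C(k_\lambda,k_\lambda)\cong k$. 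If $\lambda\neq\lambda'$, the elements $t^\lambda$ and $t^{\lambda'}$ are part of the $k$-basis $\{t^\mu\}_{\mu\in\Lambda}$ of $C$, hence are $k$-linearly independent, and after identifying $k_{\lambda'}\otimes_k C\cong C$ via the generator $t^{\lambda'}$ the relation forces $c=0$. This gives $\Hom_C(k_\lambda,k_{\lambda'})=0$ in the unequal case.

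There is no genuine obstacle here; the argument is just a bookkeeping check on the coaction, made clean by the fact that the $t^\mu$ form an actual $k$-basis of $C$ (so linear independence holds unconditionally over the base ring $k$, with no flatness or field hypothesis needed). I would therefore keep the write-up short and emphasize only these two points: the determination of $f$ by one scalar, and the use of $k$-linear independence of $\{t^\mu\}_{\mu\in\Lambda}$ to conclude vanishing when $\lambda\neq\lambda'$.
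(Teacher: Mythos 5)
Your argument is correct, but it takes a genuinely different (and somewhat more elementary) route than the paper's. The paper proves the $\lambda\neq\lambda'$ case by passing to the dual-algebra action: a comodule map automatically commutes with the $C^\ast$-action defined just above, so one writes $f(1)=f(p_\lambda\cdot 1)=p_\lambda f(1)=0$, using that $p_\lambda$ acts as the identity on $k_\lambda$ and as zero on $k_{\lambda'}$. You instead verify the comodule compatibility square $(f\otimes\id_C)\circ\rho_{k_\lambda}=\rho_{k_{\lambda'}}\circ f$ directly on the generator and invoke the $k$-linear independence of the basis $\{t^\mu\}$ of $C=k[\Lambda]$. Your version is self-contained and does not presuppose the translation $C\comod\to C^\ast\cmod$; the paper's version is shorter and foreshadows the Hecke-algebra formalism (the $p_\lambda$'s) that the rest of the section runs on. Both are valid; the only point worth highlighting is that both hinge on the same structural fact — the $t^\mu$'s being an honest $k$-basis of $C$ with no flatness hypotheses needed — which is exactly what makes diagonalizable groups behave well over arbitrary base rings.
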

	\begin{proof}
		The case $\lambda=\lambda'$ is obvious. Suppose that $\lambda\neq \lambda'$. Then any $k\left[\Lambda\right]$-comodule homomorphism $f:k_\lambda\to k_{\lambda'}$ is zero since
		\[f(1)=f(p_\lambda\cdot 1)=p_\lambda f(1)=0.\]
	\end{proof}
	Let $R(T)$ be the $k$-submodule of $k\left[\Lambda\right]^\ast$ spanned by $\{p_\lambda:\lambda\in\Lambda\}$, i.e., $R(T)=\mathop{\oplus} kp_\lambda\subset k\left[\Lambda\right]^\ast$. This is a subalgebra of $k\left[\Lambda\right]^\ast$ which is possibly nonunital. We call $R(T)$ the Hecke algebra of $T$. We will denote the category of nonunital $R(T)$-modules by $R(T)\cmod$. Notice that there are canonical functors
	\begin{equation}\label{eq:comodtoHeckemod}
	k\left[\Lambda\right]\comod\to k\left[\Lambda\right]^\ast\cmod\to R(T)\cmod.
	\end{equation}
	We call a unital $k\left[\Lambda\right]^\ast$-module (resp.\ an $R(T)$-module) $V$ rational (resp.\ approximately unital) if for any $v\in V$ there exists a finite subset $I\subset\Lambda$ such that $\sum_{\lambda\in I} p_\lambda v=v$.
	Their categories will be denoted by $k\left[\Lambda\right]^\ast\cmod^{\rat}$ and $R(T)\cmod^{\un}$ respectively. The reader may see \cite{MR0252485} Section 2.1 for a standard treatment of rational modules for coalgebras over fields.
	\begin{thm}\label{thm:comodisappunimod}
		The functors \eqref{eq:comodtoHeckemod} restrict to isomorphisms of categories
		\[k\left[\Lambda\right]\comod\cong k\left[\Lambda\right]^\ast\cmod^{\rat}\cong R(T)\cmod^{\un}.\]
	\end{thm}
	\begin{proof}
		It is clear that rational $k\left[\Lambda\right]^\ast$-modules restrict to approximately unital $R(T)$-modules. Let $V$ be a $k\left[\Lambda\right]$-comodule, and $v\in V$. Then one can find a finite subset $I\subset \Lambda$ such that $\rho_V(v)=\sum_{\lambda\in I}p_\lambda v\otimes t^\lambda$.
		The counitality of the coaction implies $\sum_{\lambda\in I}p_\lambda v=v$. Therefore we have shown that the functors \eqref{eq:comodtoHeckemod} factor through $k\left[\Lambda\right]^\ast\cmod^{\rat}$ and $R(T)\cmod^{\un}$.
		
		Let $V$ be a rational $k\left[\Lambda\right]^\ast$-module or an approximately unital $R(T)$-module, and $v\in V$. Choose a finite subset $I\subset\Lambda$ such that $\sum_{\lambda\in I}p_\lambda v=v$. For an element $\lambda\in\Lambda\setminus I$, we have $p_\lambda v=p_\lambda(\sum_{\mu\in I}p_\mu v)
		=(\sum_{\mu\in I}p_\lambda p_\mu)v=0$.
		
		We now construct the inverses. If we are given an approximately unital $R(T)$-module $M$, it naturally extends to a rational $k\left[\Lambda\right]^\ast$-module by an essentially finite sum
		$(\sum c_\lambda p_\lambda)v=\sum c_\lambda p_\lambda v$.
		For a rational $k\left[\Lambda\right]^\ast$-module $V$, one can define the coaction map $\rho_V:V\to V\otimes k\left[\Lambda\right]$ by $\rho_V(v)=\sum_\lambda p_\lambda v\otimes t^\lambda$.
		These functors provide the desired inverses.
	\end{proof}
	\begin{var}
		Let $k$ be a $\bC$-algebra, and $K$ be a complex reductive group. In general, for a free $k$-module $W$ of finite rank, $\End_k W$ is a coalgebra over $k$ for the canonical isomorphism $\End_k(W)\cong\End_k(W)^\ast$. For each irreducible representation $V$ of $K$, we have a coalgebra homomorphism
		\[\End_k(V\otimes_\bC k)\to\cO(K\otimes_\bC k).\]
		Passing to all isomorphism classes, we get an isomorphism
		\[\oplus\End_k(V\otimes_\bC k')\cong\cO(K\otimes_\bC k')\]
		of coalgebras (the algebraic Peter-Weyl theorem). Passing to summand wise duals, we obtain an approximately unital ring $R(K\otimes_\bC k)\cong \oplus\End_k(V\otimes_\bC k)$ which is compatible with base changes. Moreover, the category of approximately unital $R(K\otimes_\bC k)$-modules is isomorphic to $K\otimes_\bC k\cmod$.
	\end{var}
	\begin{rem}[\cite{MR1843016}, \cite{MR2138086}]
		It is known that the isomorphism $C\comod\cong C^\ast\cmod^{\rat}$ is valid for coalgebras $C$ which are projective as $k$-modules. This is satisfied when $C$ is the coordinate ring of a split reductive group over $k$.
	\end{rem}
	\begin{cor}[$T$-weight module decomposition, \cite{MR2015057} I.2.11]
		Let $V$ be a $k\left[\Lambda\right]$-comodule. For each $\lambda\in\Lambda$, set $V_\lambda=p_\lambda(V)$. Then we have a decomposition $V=\oplus_{\lambda\in\Lambda}V_\lambda$ as a $k\left[\Lambda\right]$-comodule. We call $V_\lambda$ the $\lambda$-weight module.
	\end{cor}
	\begin{cor}\label{cor:projgenofCcomod}
		The comodules $k_\lambda$ form a family of projective generators of the category $k\left[\Lambda\right]\comod$.
	\end{cor}
	\begin{cor}
		The category $k\left[\Lambda\right]\comod$ has enough projectives. 
	\end{cor}
	\begin{proof}
		This follows from \cite{MR1291599} Proposition 4.6.6.
	\end{proof}
	\begin{cor}
		A $k\left[\Lambda\right]$-comodule $V$ is injective (resp.\ projective) if and only if each weight module $V_\lambda$ is injective (resp.\ projective) as a $k$-module.
	\end{cor}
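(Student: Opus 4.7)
The plan is to reduce everything to $k$-modules by using the weight decomposition of Corollary 2.8 as a complete invariant for the comodule category. The key technical input I would establish first is a Hom-formula: for $C$-comodules $A$ and $V$, there is a natural isomorphism
\[\Hom_C(A,V)\cong\prod_{\lambda\in\Lambda}\Hom_k(A_\lambda,V_\lambda).\]
Indeed, any comodule map $f:A\to V$ commutes with the action of each idempotent $p_\lambda$, so $f(A_\lambda)\subseteq V_\lambda$; conversely, a family of $k$-linear maps $f_\lambda:A_\lambda\to V_\lambda$ assembles into a comodule map on $A=\oplus_\lambda A_\lambda$ by Corollary~\ref{2.8}.

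Next I would observe that the functor $V\mapsto V_\lambda$ from $C\comod$ to $k\cmod$ is exact, and that these functors are jointly conservative. This is immediate from Corollary~\ref{2.8}: a sequence of comodules is exact if and only if each of its weight-space sequences is exact, because direct sums and kernels commute termwise.

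Combining the two facts, for any short exact sequence $0\to A\to B\to C\to 0$ in $C\comod$ the $\Hom$-sequence into $V$ (respectively out of $V$) factors as the direct product, indexed by $\lambda\in\Lambda$, of the corresponding $\Hom_k$-sequences between weight spaces. Since a product of exact sequences of abelian groups is exact, the $\Hom$-sequence in $C\comod$ is exact if and only if every $\Hom_k$-sequence at each weight $\lambda$ is. This yields the claim for injectivity using $\Hom_k(-,V_\lambda)$ and for projectivity using $\Hom_k(V_\lambda,-)$.

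There is really no obstacle to speak of; the only point that wants a moment of care is the appearance of a direct product (not direct sum) in the Hom-formula, which prevents the naive argument ``$V=\oplus_\lambda V_\lambda$ and each summand is injective, hence so is the sum'' — such an argument would be illegitimate since direct sums of injectives need not be injective. The Hom-formula sidesteps this issue completely and gives both directions for injectives and projectives at once.
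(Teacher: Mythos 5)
Your proof is correct and essentially follows the paper's route, but it is more explicit at precisely the point where the paper is terse. The paper's proof says ``thanks to the action of $R(T)$, each $V_\lambda$ is injective (resp.\ projective) in $C\comod$ if and only if so is in $k\cmod$; the assertion now follows since each $V_\lambda$ is a retract of $V$.'' The retract argument handles the forward implication ($V$ injective $\Rightarrow$ each $V_\lambda$ injective) cleanly, but the converse implication needs exactly what you supply: the Hom-formula $\Hom_C(A,V)\cong\prod_\lambda\Hom_k(A_\lambda,V_\lambda)$, which shows that $V=\oplus_\lambda V_\lambda$ is injective in $C\comod$ whenever each summand is injective over $k$. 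Your remark that this sidesteps the ``direct sums of injectives need not be injective'' pitfall is well taken and is really the crux that the paper's phrasing glosses over; the paper's ``thanks to the action of $R(T)$'' is precisely an appeal to this same Hom-formula, just unspoken. Two small points worth noting in a final write-up: to conclude injectivity of $V_\lambda$ over $k$ from injectivity in $C\comod$ via the formula, you should observe that $(-)_\lambda:C\comod\to k\cmod$ is essentially surjective (any $k$-module sits as a one-weight comodule), so exactness of $\Hom_k(-,V_\lambda)$ on all of $k\cmod$ really is being tested; and, as you implicitly do, the exactness of $(-)_\lambda$ used to transport short exact sequences is immediate from Corollary~\ref{2.8}.
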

	\begin{proof}
		Thanks to the action of $R(T)$, each of $V_\lambda$ is injective (resp.\ projective) in $k\left[\Lambda\right]\comod$ if and only if it is so in the category of $k$-modules. The assertion now follows since each $V_\lambda$ is a retract of $V$.
	\end{proof}
	As an application we can introduce the notion of $T$-finite part:
	\begin{cor}
		\begin{enumerate}
			\renewcommand{\labelenumi}{(\arabic{enumi})}
			\item The category $R(T)\cmod^{\un}$ is an abelian subcategory of $R(T)\cmod$.
			\item The embedding
			\begin{equation}\label{eq:embedding}
			k\left[\Lambda\right]\comod\cong R(T)\cmod^{\un}\hookrightarrow R(T)\cmod
			\end{equation}
			admits an exact right adjoint functor $(-)_T$.
		\end{enumerate}
	\end{cor}
	\begin{proof}
		Part (1) follows since the functors of \eqref{eq:embedding} commute with the forgetful functors to the category of $k$-modules which are exact and conservative.
		
		To prove (2), let $V$ be an $R(T)$-module. We say that an element $v\in V$ is $T$-finite if there is a finite subset $I\subset\Lambda$ such that $\sum_{\lambda\in I}p_\lambda v=v$. We denote the subset of $T$-finite elements of $V$ by $V_T\subset V$. Then $V_T$ is an approximately unital $R(T)$-submodule of $V$. Moreover $V_T\subset V$ exhibits an $R(T)\cmod^{\un}$-colocalization of $V$. It is proved in a similar way to \cite{MR1330919} Proposition 1.55 that the resulting colocalization functor is exact.
	\end{proof}
	\begin{cor}
		We have $(k\left[\Lambda\right]^\ast)_T=R(T)$.
	\end{cor}
	\begin{rem}
		The arguments above work if we replace $k\left[\Lambda\right]$ by a diagonal coalgebra $C$ in the sense of \cite{anel2013sweedler} Example 1.3.7. It is equivalent to saying that there are free bases $\{t^\lambda\}$ and $\{s^\lambda\}$ such that $\Delta_C(s^\lambda)=t^\lambda\otimes s^\lambda$. In fact, the coassociativity of $\Delta_C$ implies $\Delta_C(t^\lambda)\otimes s^\lambda=t^\lambda\otimes t^\lambda\otimes s^\lambda$. In particular, we have $\Delta_C(t^\lambda)=t^\lambda\otimes t^\lambda$.
	\end{rem}
	Thanks to Corollary \ref{cor:projgenofCcomod}, the projective model structure also exists.
	\begin{note}
		Let $\cA$ be an abelian category, $M$ be an object, and $n$ be an integer. Then we denote the cochain complexes
		\[\cdots\to 0\to\overset{-n}{M}=\overset{-n+1}{M}\to 0\to\cdots\]
		\[\cdots\to 0\to\overset{-n}{M}\to 0 \to\cdots\]
		by $D^nM$ and $S^nM$ respectively. Notice that we have a natural inclusion $S^{n-1}M\to D^nM$.
	\end{note}
	\begin{cor}
		There exists a combinatorial model structure on the category of cochain complexes of $k\left[\Lambda\right]$-comodules which is described as follows:
		\begin{enumerate}
			\item[(F)] A map is a fibration if and only if it is an epimorphism.
			\item[(W)] A map is a weak equivalence if and only if it is a quasi-isomorphism.
			\item[(C)] A map is a cofibration if and only if it is a degreewise split monomorphism with a cofibrant cokernel.
		\end{enumerate}
		Moreover, the generating cofibrations (resp.\ trivial cofibrations) are the standard embeddings $S^{p-1}k_\lambda\to D^pk_\lambda$ (resp.\ $0\to D^pk_\lambda$), where $p$ runs through all integers.
	\end{cor}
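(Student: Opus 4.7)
The plan is to identify $C\comod$ with a product of copies of $k\cmod$ and to transport the classical projective model structure on $\mathrm{Ch}(k\cmod)$ through this identification. Combining Lemma \ref{2.3} with Corollary \ref{2.8} yields an equivalence of $k$-linear abelian categories
\[C\comod \;\simeq\; \prod_{\lambda\in\Lambda} k\cmod,\qquad V\mapsto(V_\lambda)_{\lambda\in\Lambda},\]
since any comodule morphism preserves types by Schur, whence $\Hom_C(V,W) = \prod_\lambda \Hom_k(V_\lambda,W_\lambda)$. Passing to cochain complexes yields an equivalence $\mathrm{Ch}(C\comod) \simeq \prod_\lambda \mathrm{Ch}(k\cmod)$.

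Next I would equip each factor $\mathrm{Ch}(k\cmod)$ with the standard projective model structure, cofibrantly generated by $I_0 = \{S^{p-1}k\to D^pk\}_{p\in\bZ}$ and $J_0 = \{0\to D^pk\}_{p\in\bZ}$, and take the product model structure on $\prod_\lambda\mathrm{Ch}(k\cmod)$. Under the above equivalence, the generator $k$ of the $\lambda$-th factor corresponds to $k_\lambda$, so the generating (trivial) cofibrations transport exactly to $S^{p-1}k_\lambda\to D^pk_\lambda$ (resp.\ $0\to D^pk_\lambda$), matching the statement. Since epimorphisms, quasi-isomorphisms, and degreewise split monomorphisms in $C\comod$ are all detected on each type (using the exactness of $(-)_\lambda$ from Corollary \ref{2.8}), the product's fibrations, weak equivalences, and cofibrations translate exactly to (F), (W), (C).

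The main technical obstacle I anticipate is combinatoriality: one must verify that the generating cofibrations are small in the locally presentable sense so that the small object argument applies across the unbounded range of $p$. This reduces to showing $k_\lambda$ is finitely presentable in $C\comod$, which holds because $\Hom_C(k_\lambda,-)\cong(-)_\lambda$ coincides with the action of the idempotent $p_\lambda\in R(T)$ and hence commutes with all filtered colimits (direct summands commute with arbitrary colimits). Granted this, $C\comod$ is locally finitely presentable, the product of locally finitely presentable categories indexed by a set remains locally presentable, and the model structure together with its description of (F), (W), (C) follows formally from the product decomposition.
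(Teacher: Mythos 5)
Your argument is correct, but it takes a genuinely different route from the paper's. The paper simply invokes [CH] Theorem~5.7 directly: $C\comod$ is locally presentable and, by Corollary~\ref{2.9}, has the small set $\{k_\lambda\}_{\lambda\in\Lambda}$ of (compact) projective generators, so the theorem immediately produces the model structure with the stated generating (trivial) cofibrations. You instead first upgrade Lemma~\ref{2.3} and Corollary~\ref{2.8} to the equivalence $C\comod\simeq\prod_{\lambda\in\Lambda}k\cmod$, put the classical projective model structure on each factor of unbounded complexes over $k$, and pass to the product model structure on $\prod_\lambda\mathrm{Ch}(k\cmod)$. This is legitimate and has the pedagogical advantage of making the componentwise detection of fibrations, weak equivalences, and cofibrations completely transparent; but it silently relies on two auxiliary facts that the paper's one-line citation absorbs: the existence and cofibrant generation of the projective model structure on unbounded $\mathrm{Ch}(k\cmod)$ for an arbitrary commutative ring $k$ (itself a special case of [CH] Theorem~5.7, or Hovey's theorem), and the fact that a product of combinatorial model categories indexed by a \emph{set} is again combinatorial with the evident generating sets (true, and you sketch why the needed smallness and local presentability hold, but it deserves an explicit reference or a short verification along the lines you outline). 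So your plan is sound; the paper's proof is essentially the same observation with the decomposition into $\Lambda$-types collapsed: rather than splitting $C\comod$ into a product and then citing the base case, it notes that $\{k_\lambda\}$ already form projective generators of $C\comod$ and cites the general theorem once.
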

	\begin{proof}
		This is a direct consequence of \cite{MR1912401} Theorem 5.7: for a locally presentable abelian category $\cA$ equipped with a small set $G$ of projective generators, there exists a model structure on the category of cochain complexes of objects of $\cA$ such that the following conditions are satisfied:
		\begin{enumerate}
			\item[(F)] A morphism is a fibration if and only if it is an epimorphism.
			\item[(W)] A morphism is a weak equivalence if and only if it is a quasi-isomorphism.
			\item[(C)] A morphism is a cofibration if and only if it is a degreewise split monomorphism with a cofibrant cokernel.
		\end{enumerate}
		Moreover, the generating cofibrations (resp.\ trivial cofibrations) are the standard embeddings $S^{m-1}P\to D^mP$ (resp.\ $0\to D^mP$), where $m$ runs through all integers, and $P$ are the members of $G$.
	\end{proof}
	\section{The Hecke algebra in the relative setting}
	Let $T=\Spec k\left[\Lambda\right]$ be a diagonalizable group as in the previous section. 
	\begin{lem}\label{lem:closedmonoidalstructure}
		Let $V$ and $V'$ be $T$-modules.
		\begin{enumerate}
			\renewcommand{\labelenumi}{(\arabic{enumi})}
			\item The action of $R(T)$ corresponding to the tensor representation $V\otimes_k V'$ of $T$ is given by
			\[p_\lambda(v\otimes v')=\sum_\mu p_\mu v\otimes p_{\lambda-\mu}v'.\]
			\item The Hom $k$-module $\Hom_k(V,V')$ is an $R(T)$-module for
			\[(p_\lambda f)(v)=\sum_{\mu\in \Lambda} p_{\lambda+\mu} f(p_\mu v).\]
			Moreover, the $T$-finite part $\Hom_k(V,V')_T$ exhibits the closed structure of the symmetric monoidal category $T\cmod$.
		\end{enumerate}
	\end{lem}
	\begin{proof}
		Part (1) follows from
		\[\rho_{V\otimes_k V'}(v\otimes v')=\sum_{\lambda,\lambda'\in\Lambda} p_\lambda v\otimes p_{\lambda'}v'\otimes t^{\lambda+\lambda'}
		=\sum_{\lambda\in\Lambda}\sum_{\mu\in\Lambda}p_\mu v\otimes p_{\lambda-\mu}v'\otimes t^\lambda.\]
		To see (2), let $\lambda,\lambda'\in\Lambda$, $f\in\Hom_k(V,V')$, and $v\in V$. Then we have
		\[\begin{split}
		(p_\lambda(p_{\lambda'} f))(v)
		&=\sum_{\mu\in\Lambda}p_{\lambda+\mu}(p_{\lambda'} f)(p_\mu v)\\
		&=\sum_{\mu\in\Lambda}p_{\lambda+\mu}
		\sum_{\mu'\in\Lambda}p_{\lambda'+\mu'}f(p_{\mu'}p_\mu v)\\
		&=\sum_{\mu\in\Lambda}p_{\lambda+\mu}p_{\lambda'+\mu} f(p_\mu v)\\
		&=\left\{ \begin{array}{ll}
		\sum p_{\lambda+\mu} f(p_\mu v)& (\lambda=\lambda') \\
		0& (\lambda\neq\lambda'),
		\end{array} \right.\\
		&=((p_\lambda p_{\lambda'})f)(v).
		\end{split}\]
		The internal structure of the category of $k$-modules induces an internal structure on $\Hom_k(-,-)_T$.
	\end{proof}
	Put the natural structure of a $k\left[\Lambda\right]$-module on $k\left[\Lambda\right]^\ast$. Explicitly, the action is given by
	\[t^\lambda(\sum_\mu c_\mu p_\mu)=\sum_\mu c_\mu p_{\mu-\lambda}.\]
	Then $R(T)$ is a $k\left[\Lambda\right]$-submodule of $k\left[\Lambda\right]^\ast$. Therefore we have a natural isomorphism $R(T)\otimes_k V\cong R(T)\otimes_{k\left[\Lambda\right]} (k\left[\Lambda\right]\otimes_k V)$ for any $k$-module $V$. If we write $\lambda\in\Lambda$ and $v=\sum_\mu t^\mu\otimes v_\mu \in k\left[\Lambda\right]\otimes V$, the inverse image of $p_\lambda\otimes v$ is $\sum_\mu p_{\lambda-\mu}\otimes v_\mu$. 
	
	Suppose next that $V$ is a $T$-module. By definition of a $T$-module, $\id_{k\left[\Lambda\right]}$ induces a $k\left[\Lambda\right]$-module automorphism $V\otimes_k k\left[\Lambda\right]\cong V\otimes_k k\left[\Lambda\right]$. Taking the base change $R(T)\otimes_{k\left[\Lambda\right]}-$ and switching the factors, we get an isomorphism of $k$-modules
	\[\tau_V:R(T)\otimes_k V\cong V\otimes_k R(T);\]
	\[p_\lambda\otimes v\mapsto \sum_\mu p_\mu v\otimes p_{\lambda-\mu}.\]
	Its inverse is given by $v\otimes p_\lambda\mapsto \sum_\mu p_{\lambda+\mu}\otimes p_\mu v$.
	
	Let $(\cA,T)$ be a weak Harish-Chandra pair.
	\begin{lem}\label{lem:weakrelativeHeckalgebra}
		The product
		\[\begin{split}
		(\cA\otimes_k R(T))\otimes_k (\cA\otimes_k R(T))
		&\xrightarrow{\cA\otimes_k\tau_\cA\otimes_k R(T)}\cA\otimes_k \cA\otimes_k R(T)\otimes_k R(T)\\
		&\xrightarrow{m_\cA\otimes m_{R(T)}} \cA\otimes_k R(T);
		\end{split}\]
		\[(a\otimes p_\lambda)\otimes(b\otimes p_\mu)\mapsto ap_{\lambda-\mu} b\otimes p_\mu\]
		is associative. We will refer to the resulting associative algebra as $\cA\sharp R(T)$.
	\end{lem}
	\begin{proof}
		Lemma \ref{lem:closedmonoidalstructure} (1) implies that for $a\otimes p_\lambda, b\otimes p_\mu,c\otimes p_\nu\in\cA\otimes_k R(T)$, we have
		\[\begin{split}
		((a\otimes p_\lambda)(b\otimes p_\mu))(c\otimes p_\nu)
		&=(ap_{\lambda-\mu}b\otimes p_\mu)(c\otimes p_\nu)\\
		&=ap_{\lambda-\mu}bp_{\mu-\nu}c\otimes p_\nu\\
		&=ap_{\lambda-w}(bp_{\mu-\nu}c)\otimes p_\nu\\
		&=(a\otimes p_\lambda)(bp_{\mu-\nu}c\otimes p_\nu)\\
		&=(a\otimes p_\lambda)((b\otimes p_\mu)(c\otimes p_\nu)).
		\end{split}\]
	\end{proof}
	We say that an $\cA\sharp R(T)$-module is approximately unital if it is so as an $R(T)$-module. The category of approximately unital $\cA\sharp R(T)$-modules will be denoted by $\cA\sharp R(T)\cmod^{\un}$.
	\begin{cor}\label{cor:fundamentaltheoremofweakHeckealgebra}
		There is an isomorphism $(\cA,T)\cmod_w\cong \cA\sharp R(T)\cmod^{\un}$.
	\end{cor}
	\begin{proof}
		Similar computations to the proof of Lemma \ref{lem:weakrelativeHeckalgebra} show that a weak $(\cA,T)$-module $M$ is an approximately unital $\cA\sharp R(T)$-module for
		\[(a\otimes p_\lambda)m=a(p_\lambda m).\]
		Conversely, if we are given an approximately unital $\cA\sharp R(T)$-module $M$ then define an action of $\cA$ on $M$ by the essentially finite sum
		\[am=\sum_{\lambda\in\Lambda} (a\otimes p_\lambda) m.\]
		These correspondences determine the desired isomorphism.
	\end{proof}
	\begin{rem}
		The functor $(-)_T$ is compatible with the action of $\cA$. Namely, the embedding of $\cA\sharp R(T)\cmod^{\un}$ to the category $\cA\sharp R(T)\cmod$ of $\cA\sharp R(T)$-modules admits an exact right adjoint functor $(-)_T$ which fits into the commutative diagram
		\[\xymatrix{\cA\sharp R(T)\cmod\ar[r]^{(-)_T}\ar[d]&\cA\sharp R(T)\cmod^{\un}\ar[d]\\
			R(T)\cmod\ar[r]_{(-)_T}&R(T)\cmod^{\un},
		}\]
		where the vertical arrows are obtained by the restriction along $R(T)\to \cA\sharp R(T)$.
	\end{rem}
	We next consider its version for Harish-Chandra pairs $(\cA,T)$. Assume that $T$ is the split torus $T^n$ of rank $n\geq 0$, i.e., $T=\Spec k\left[\bZ^n\right]$. The Lie algebra $\ft=\ft^n$ has a basis $\{H_1,\cdots, H_n\}$ which acts on $R(T^n)$ by $H_ip_\lambda=\lambda_ip_\lambda$ for an integer $1\leq i\leq n$ and $\lambda=(\lambda_j)\in\bZ^n$.	Then the algebra structure of $\cA\sharp R(T)$ descends to $\cA\otimes_{U(\ft^n)}R(T^n)$. We will refer to it as $R(\cA,T^n)$. An $R(\cA,T^n)$-module is said to be approximately unital if it is so as an $R(T^n)$-module.
	\begin{cor}\label{cor:relativeHecke}
		Corollary \ref{cor:fundamentaltheoremofweakHeckealgebra} induces an isomorphism of categories of approximately unital $R(\cA,T^n)$-modules and $(\cA,T^n)$-modules.
	\end{cor}
	\begin{rem}
		It is straightforward to prove the differential graded analogs of Lemma \ref{lem:weakrelativeHeckalgebra} - Corollary \ref{cor:relativeHecke}.
	\end{rem}
	\section{Integral models of representations of split semisimple Lie groups of type $A_1$}
	Fix a positive integer $n>0$. Then the Harish-Chandra pair over $\bC$ associated to the $n$-cover of $\PU(1,1)$ is given as follows:
	\[\fsl_2=\left\{\left(\begin{array}{cc}
	a&b\\
	c&-a\\
	\end{array}
	\right):~a,b,c\in \bC\right\}\]
	\[T^1=\Spec \bC\left[t^{\pm 1}\right]\]
	\[\ft^1\to\fsl_2;~H_1\mapsto\frac{n}{2}\left(\begin{array}{cc}
	1&0\\
	0&-1\\
	\end{array}
	\right)\]
	\[t\cdot \left(\begin{array}{cc}
	0&1\\
	0&0\\
	\end{array}
	\right)=t^n\left(\begin{array}{cc}
	0&1\\
	0&0\\
	\end{array}
	\right)\]
	\[t\cdot \left(\begin{array}{cc}
	0&0\\
	1&0\\
	\end{array}
	\right)=t^{-n}\left(\begin{array}{cc}
	0&0\\
	1&0\\
	\end{array}
	\right)\]
	\[t\cdot \left(\begin{array}{cc}
	1&0\\
	0&-1\\
	\end{array}
	\right)=\left(\begin{array}{cc}
	1&0\\
	0&-1\\
	\end{array}
	\right),\]
	where $t\in T^1$. The Cartan involution $\theta$ on $\fsl_2$ is given by
	\[\theta\left(\left(\begin{array}{cc}
	0&1\\
	0&0\\
	\end{array}
	\right)\right)=-\left(\begin{array}{cc}
	0&1\\
	0&0\\
	\end{array}
	\right)\]
	\[\theta\left(\left(\begin{array}{cc}
	0&0\\
	1&0\\
	\end{array}
	\right)\right)=-\left(\begin{array}{cc}
	0&0\\
	1&0\\
	\end{array}
	\right)\]
	\[\theta\left(\left(\begin{array}{cc}
	1&0\\
	0&-1\\
	\end{array}
	\right)\right)=\left(\begin{array}{cc}
	1&0\\
	0&-1\\
	\end{array}
	\right).\]
	
	The subpair associated to a minimal parabolic subgroup of the $n$-cover of PU(1,1) is given by
	\[\fq_\bC=\left\{\left(\begin{array}{cc}
	a&-a+b\\
	a+b&-a\\
	\end{array}
	\right):~a,b\in\bC\right\}\]
	\[M_\bC=\Spec\bC\left[t^{\pm 1}\right]/(t^n-1)=\Ker (T^1\to T^1;~t\mapsto t^n).\]
	We also define $\theta$-stable Borel subalgebras $\fb_\bC$ and $\bar{\fb}_\bC$ by
	\[\fb_\bC=\left\{\left(\begin{array}{cc}
	a&b\\
	0&-a\\
	\end{array}
	\right):~a,b\in\bC\right\}\]
	\[\bar{\fb}_\bC=\left\{\left(\begin{array}{cc}
	a&0\\
	b&-a\\
	\end{array}
	\right):~a,b\in\bC\right\}.\]
	\begin{defn}\label{defn:splitpair}
		A split $\bZ$-form of $(\fsl_2,T^1)$ is a Harish-Chandra pair $(\fg,T^1)$ over $\bZ$, equipped with a $T^1$-equivariant Lie algebra homomorphism $\alpha:\fg\to\fsl_2$ such that the following conditions are satisfied:
		\begin{enumerate}
			\renewcommand{\labelenumi}{(\roman{enumi})}
			\item $\fg$ is free of finite rank as a $\bZ$-module.
			\item The $\bC$-linear extension $\fg\otimes_{\bZ}\bC\to\fsl_2$ is an isomorphism.
			\item The given map $\psi:\ft^1\to\fg$ is one-to-one onto the weight 0 module $\fg_0$.
			\item We have $\alpha(\psi(H_1))=\frac{n}{2}\left(\begin{array}{cc}
			1&0\\
			0&-1\\
			\end{array}
			\right)$.
		\end{enumerate}
		In particular, $\alpha$ gives rise to an isomorphism $(\fg\otimes_{\bZ}\bC,T^1)\cong(\fsl_2,T^1)$. A morphism $f:(\fg,T^1,\alpha)\to (\fg',T^1,\alpha')$ is a map $T^1$-equivariant Lie algebra homomorphism $f:\fg\to \fg'$ such that $\alpha'\circ f=\alpha$. Remark that for a morphism $f:(\fg,T^1,\alpha)\to (\fg',T^1,\alpha')$, $(f,\id_{T^1})$ is a map from $(\fg,T^1)$ to $(\fg',T^1)$.
	\end{defn}
	\begin{thm}[Classification]\label{thm:classification}
		For a positive integer $m$ and $c\in\bC^\times$, define a split $\bZ$-form $(\fg_{n,m},T^1,\alpha_c)$ as follows:
		\begin{itemize}
			\item $\fg_{n,m}$ has a free $\bZ$-basis $\{E,F,H\}$;
			\item The Lie bracket of $\fg_{n,m}$ is defined by
			\[\left[H,E\right]=nE\]
			\[\left[H,F\right]=-nF\]
			\[\left[E,F\right]=mH;\]
			\item The split torus $T^1=\Spec\bZ\left[t^{\pm 1}\right]$ acts on $\fg_{n,m}$ by
			\[t\cdot E=t^nE\]
			\[t\cdot F=t^{-n}F\]
			\[t\cdot H=H\]
			for $t\in T^1$;
			\item The $T^1$-equivariant Lie algebra homomorphism $\ft^1\to\fg_{n,m}$ sends $H_1\to H$.
			\item The realization homomorphism $\alpha_c:\fg_{n,m}\to\fsl_2$ is defined as
			\[\alpha_c(E)=c\left(\begin{array}{cc}
			0&1\\
			0&0\\
			\end{array}
			\right)\]
			\[\alpha_c(F)=\frac{mn}{2c}\left(\begin{array}{cc}
			0&0\\
			1&0\\
			\end{array}
			\right)\]
			\[\alpha_c(H)=\frac{n}{2}\left(\begin{array}{cc}
			1&0\\
			0&-1\\
			\end{array}
			\right)\]
		\end{itemize}
		This gives rise to a bijection between the set of isomorphism classes of split $\bZ$-forms and $\bZ_{>0}\times\bC^\times/\{\pm 1\}$.
	\end{thm}
	\begin{proof}
		Let $(m,c)\in \bZ_{>0}\times\bC^\times$. Then the map
		\[E\mapsto -E\]
		\[F\mapsto -F\]
		\[H\mapsto H\]
		determines an isomorphism $(\fg_{n,m},T^1,\alpha_c)\cong (\fg_{n,m},T^1,\alpha_{-c})$.
		
		Suppose that we are given a split $\bZ$-form $(\fg,T^1,\alpha)$ of $(\fsl_2,T^1)$. Set $H=\psi(H_1)\in\fg$. Since $\alpha$ is $T^1$-equivariant, we have a weight decomposition
		\[\fg=\fg_{-n}\oplus\fg_0\oplus\fg_{n},\]
		where each $T^1$-weight module is free of rank 1. Hence we can find a nonzero complex number $c$ which is unique up to sign and a unique element $E\in\fg$ such that
		\[\alpha(\fg_n)=\bZ\left(\begin{array}{cc}
		0&c\\
		0&0\\
		\end{array}
		\right)\]
		\[\alpha(E)=c\left(\begin{array}{cc}
		0&1\\
		0&0\\
		\end{array}
		\right).\]
		There is a unique positive integer $m$ such that $\left[\fg_n,\fg_{-n}\right]=m\bZ H$. We can then find a unique element $F\in\fg_{-n}$ such that $\left[E,F\right]=mH$. Since $\alpha$ is a Lie algebra homomorphism, we have
		\[\alpha(F)=\frac{mn}{2c}\left(\begin{array}{cc}
		0&0\\
		1&0\\
		\end{array}
		\right).\]
		Hence we obtain $(\fg,T^1,\alpha)\cong (\fg_{n,m},T^1,\alpha_c)$.
		
		To see that this correspondence is injective up to sign of $c$, take
		\[(m,c),(m',c')\in \bZ_{>0}\times\bC^\times.\]
		Suppose that there is an isomorphism \[f:(\fg_{n,m},T^1,\alpha_c)\cong(\fg_{n,m'},T^1,\alpha_{c'}).\]
		Since $f$ respects $\alpha$ and is $T^1$-equivariant, we have
		\[f(E)=\frac{c}{c'} E\]
		\[f(F)=\frac{c'}{c}F\]
		\[f(H)=H.\]
		Since each weight submodule of $\fg_{n,m}$ and $\fg_{n,m'}$ is free $\bZ$-module of rank 1, $\frac{c}{c'}\in\{\pm 1\}$ and $m=m'$. This completes the proof.
	\end{proof}
	Henceforth fix a $\bZ$-form $(\fg_{n,m},T^1,\alpha)$. Then the maximal integral models $\fb$ and $\bar{\fb}$ of $\fb_\bC$ and $\bar{\fb}_\bC$ are given by
	\[\fb=\alpha^{-1}(\fb_\bC)=\bZ E\oplus \bZ H\]
	\[\bar{\fb}=\alpha^{-1}(\bar{\fb}_\bC)=\bZ F\oplus \bZ H.\]
	In particular, these are independent of the choice of the parameter $c\in\bC^\times/\{\pm 1\}$.
	\begin{thm}\label{thm:thetastableinduction}
		Let $k$ be a commutative ring, and $\lambda\in\bZ$. Regard the $T^1$-module $k_\lambda$ as modules over $(\fb,T^1)$ and $(\bar{\fb},T^1)$ for the projections $\fb\to\ft^1$ and $\bar{\fb}\to\ft^1$. Then we have the following descriptions:
		\[\ind^{\fg_{n,m}}_{\bar{\fb}}(k_\lambda)=\oplus_{p\geq0} ky_{\lambda+np}\]
		\[\pro^{\fg_{n,m}}_\fb(k_\lambda)=\oplus_{p\geq 0}ky^{\lambda+np}\]
		\[y_{\lambda-n}=0\]
		\[Ey_{\lambda+np}=y_{\lambda+n(p+1)};\]
		\[Fy_{\lambda+np}=-\frac{1}{2}mp(np-n+2\lambda)y_{\lambda+n(p-1)};\]
		\[Hy_{\lambda+np}=(\lambda+np)y_{\lambda+np}\]
		\[t\cdot y_{\lambda+np}=t^{\lambda+np}y_{\lambda+np};\]
		\[y^{\lambda-n}=0;\]
		\[Ey^{\lambda+np}=-\frac{1}{2}m(p+1)(np+2\lambda)z^{\lambda+n(p+1)};\]
		\[Fy^{\lambda+np}=z^{\lambda+n(p-1)};\]
		\[Hy^{\lambda+np}=(\lambda+np)z^{\lambda+np};\]
		\[t\cdot y^{\lambda+np}=t^{\lambda+np}y^{\lambda+np},\]
		where $t\in T^1$.
	\end{thm}
	\begin{proof}
		Put $y_{\lambda+np}=E^p\otimes 1$, and define
		\[y^{\lambda+np}\in\pro^{\fg_{n,m}}_{\fb}(k_\lambda)\cong
		\Hom_{U(\fb)}(U(\fg_{n,m}),k_\lambda)_{T^1}\]
		by
		\[y^{\lambda+np}(F^q)=\left\{\begin{array}{ll}
		1& (p=q)\\
		0&(q\neq p).
		\end{array} \right.\]
		They form free bases. The actions $Fy_{\lambda+np}$ and $Ey^{\lambda+np}$ are computed as
		\[\begin{split}
		(Ey^{\lambda+np})(F^{p+1})
		&=y^{\lambda+np}(F^{p+1}E)\\
		&=y^{\lambda+np}(EF^{p+1}-\frac{1}{2}mnp(p+1)F^p-m(p+1)HF^p)\\
		&=-\frac{1}{2}mnp(p+1)-m(p+1)\lambda
		\end{split}\]
		\[\begin{split}
		Fy_{\lambda+np}
		&=FE^p\otimes 1\\
		&=E^pF\otimes 1-\frac{1}{2}mp(n(p+1)-2n+2\lambda)E^{p-1}\otimes 1\\
		&=-\frac{1}{2}mp(np-n+2\lambda)y_{\lambda+n(p-1)}.
		\end{split}\]
		The rest is obvious by definition.
	\end{proof}
	Note that $\ind^{\fg_{n,m}}_{\bar{\fb}}(k_\lambda)$ is finitely generated as a $U(\fg_{n,m})$-module by definition, and $\pro^{\fg_{n,m}}_\fb(k_\lambda)$ is not. However, they still have the same $T^1$-weights. Moreover, each of them is free of finite rank 1 as a $k$-module. In particular, these representations are admissible in the sense of \cite{10.1093/imrn/rny147}.
	\begin{rem}
		Suppose that $k$ is Noetherian. Since we are working with possibly torsion modules, it might be convenient to emphasize that $\ind^{\fg_{n,m}}_{\bar{\fb}}(k_\lambda)$ and $\pro^{\fg_{n,m}}_\fb(k_\lambda)$ satisfy the following condition on $T^1$-modules $V$ as an estimation of the size of $V$: for any finitely generated $T^1$-module $Q$, the $k$-module $\Hom_{T^1}(Q,V)$ is finitely generated. This condition is quite delicate. Suppose that $k$ is an integral domain. A finitely generated and torsion-free $k$-form of an admissible $(\fg\otimes_k \Frac(k),T^1)$-module is not admissible in general. In fact, we have a simple counterexample. Put $k=\bZ$, and consider a Harish-Chandra pair $(\bZ,\Spec\bZ)$. Set $V=\bZ\left[1/2\right]$, and put an action of the polynomial ring $\bZ\left[x\right]$ on $V$ by $x=2^{-1}$. Then $V$ is a finitely generated $\bZ\left[x\right]$-module, $V\otimes\bQ$ is of finite dimension over $\bQ$, and torsion free as a $\bZ$-module. However, $V$ is not finitely generated over $\bZ$. See also \cite{MR3853058} Proposition 4.1.2.
	\end{rem}
	\begin{rem}
		Let $(\fg,K)$ be a Harish-Chandra pair over a Noetherian domain $k$. Suppose that $\fg$ is finitely generated as a $k$-module. Then every finite subset of a $(\fg,K)$-module is contained in a finitely generated $(\fg,K)$-submodule by a similar argument to \cite{MR3853058} Proposition 3.2.4. In particular, if we are given a $(\fg,K)$-module $V$ such that $V\otimes_k \Frac (k)$ is irreducible as a $(\fg\otimes_k\Frac (k),K\otimes_k\Frac (k))$-module then there is a finitely generated $(\fg,K)$-submodule $V'\subset V$ such that $V'\otimes_k\Frac(k)\cong V\otimes_k\Frac(k)$.
	\end{rem}
	We next consider models of the real parabolic induction for $(\fg_{n,m},T^1,\alpha_{\frac{1}{2}})$. Set
	\[X=-2mnE+F+2mH\]
	\[Y=2mnE+F\]
	\[\fq=\bZ X\oplus\bZ Y\]
	\[M=\Spec\bZ\left[t\right]/(t^n-1)\]
	to obtain a subpair $(\fq,M)\subset(\fg_{n,m},T^1)$ over $\bZ$. Remark that if we write $\alpha_{\frac{1}{2}}\otimes_{\bZ}\bZ\left[1/2mn\right]$ for the scalar extension of $\alpha_{\frac{1}{2}}$ to $\bZ\left[1/2mn\right]$, we have
	\[(\alpha_{\frac{1}{2}}\otimes_{\bZ}\bZ\left[1/2mn\right])^{-1}(\fq_\bC)=
	\bZ\left[1/2mn\right] X\oplus \bZ\left[1/2mn\right] Y.\]
	
	Firstly, we discuss over a $\bZ\left[1/2mn\right]$-algebra $k$. 
	Regard $(\fg_{n,m},T^1)$ and $(\fq,M)$ as Harish-Chandra pairs over $k$.
	\begin{prop}\label{prop:compactpicture}
		The diagram
		\[\xymatrix{
			(\fq,M)\cmod\ar[r]^{I^{\fg_{n,m},T^1}_{\fq,M}}
			\ar[d]_{\cF^{\fm,M}_{\fq,M}}
			&(\fg_{n,m},T^1)\cmod\ar[d]^{\cF^{\ft^1,T^1}_{\fg_{n,m},T^1}}\\
			M\cmod\ar[r]_{\Ind^{T^1}_M}&T^1\cmod
		}\]
		is 2-commutative.
	\end{prop}
	\begin{proof}
		Observe that the summation of $\fq\subset\fg_{n,m}$ and $\psi:\ft^1\to\fg_{n,m}$ determines an isomorphism
		\begin{equation}\label{eq:Iwasawa}
		\fq\oplus\ft^1\cong\fg_{n,m}
		\end{equation}
		\[(-\frac{1}{4mn}X+\frac{1}{4mn}Y,\frac{1}{2n}H)\gets E\]
		\[(\frac{1}{2}X+\frac{1}{2}Y,-mH)\gets F\]
		\[H\gets H.\]
		Let $W$ be a $(\fq,M)$-module, and $\epsilon:I^{\fg_{n,m},T^1}_{\fq,M}(W)\to W$ be the counit. For a $T^1$-module $\chi$, we have
		\[\begin{split}
		\Hom_{T^1}(\chi,I^{\fg_{n,m},T^1}_{\fq,M}(W))
		&\cong\Hom_{\fg_{n,m},T^1}(\ind^{\fg_{n,m}}_{\ft^1} \chi,I^{\fg_{n,m},T^1}_{\fq,M}(W))\\
		&\cong\Hom_{\fq,M}({\rm ind}^{\fg_{n,m}}_{\ft^1} \chi,W)\\
		&\cong\Hom_M(\chi,W);
		\end{split}\]
		\[f\mapsto \epsilon\circ f.\]
		This proves the assertion.
	\end{proof}
	\begin{cor}
		The functor $I^{\fg_{n,m},T^1}_{\fq,M}$ is exact.
	\end{cor}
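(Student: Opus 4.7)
The plan is to deduce exactness of $I^{\fg_{n,m},T^1}_{\fq,M}$ from Proposition \ref{4.4} by checking the three functors in the bottom-and-right path of the 2-commutative square.

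First I would record the standard fact that the forgetful functor $(\fg_{n,m},T^1)\cmod \to T^1\cmod$ is exact and faithful (kernels, cokernels, and images are computed on underlying $k$-modules, so in particular on underlying $T^1$-modules), and hence reflects short exact sequences. Consequently, $I^{\fg_{n,m},T^1}_{\fq,M}$ is exact if and only if the composite
\[(\fq,M)\cmod \xrightarrow{I^{\fg_{n,m},T^1}_{\fq,M}} (\fg_{n,m},T^1)\cmod \to T^1\cmod\]
is exact. By Proposition \ref{4.4} this composite is naturally isomorphic to $\Ind^{T^1}_M$ applied after the forgetful functor $(\fq,M)\cmod \to M\cmod$. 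The latter is again exact for the same underlying-module reason, so everything reduces to proving that $\Ind^{T^1}_M : M\cmod \to T^1\cmod$ is exact.

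Next I would verify exactness of $\Ind^{T^1}_M$ using the diagonalizable-group machinery of Section 2. Both $T^1 = \Spec k[t^{\pm 1}]$ and $M = \Spec k[t]/(t^n-1) = \Spec k[\bZ/n\bZ]$ are diagonalizable, with character lattices $\bZ$ and $\bZ/n\bZ$, respectively; the inclusion of character groups is the quotient map $\bZ \to \bZ/n\bZ$. By Corollary \ref{2.8}, every $T^1$-module $V$ decomposes canonically as $V = \bigoplus_{a\in\bZ} V_a$, and every $M$-module $W$ decomposes as $W = \bigoplus_{\bar a\in\bZ/n\bZ} W_{\bar a}$; the restriction functor simply regroups weight spaces. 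Adjunction with the rank-one characters $k_a$ of $T^1$ identifies $(\Ind^{T^1}_M W)_a \cong W_{\bar a}$, so $\Ind^{T^1}_M$ just copies the $\bar a$-weight space of $W$ to each integer $a$ in the coset $\bar a$. This operation is manifestly exact: a short exact sequence of $M$-modules is exact on each weight space, and $\Ind^{T^1}_M$ applied to it is exact on each $T^1$-weight space.

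Combining the two observations gives exactness of $I^{\fg_{n,m},T^1}_{\fq,M}$. I do not anticipate a serious obstacle: the only conceptual point is checking that $\Ind^{T^1}_M$ behaves well over a general $\bZ[1/2nm]$-algebra $k$, and this is handled uniformly by the weight-space decomposition for diagonalizable groups from Section 2, which is valid over any base ring. The whole argument is formal once Proposition \ref{4.4} and the diagonalizable-group theory are in hand.
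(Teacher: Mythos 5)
Your proof is correct and follows essentially the same route as the paper's: both use the 2-commutative square of Proposition \ref{4.4} to reduce exactness of $I^{\fg_{n,m},T^1}_{\fq,M}$ to exactness of $\Ind^{T^1}_M$, using that the forgetful functors reflect exactness. The only real difference is the last step: the paper gets left exactness from right adjointness and then checks right exactness by lifting against the projective generators $k_\lambda$ of Corollary \ref{2.9} (via the adjunction and projectivity of $k_\lambda|_M$ as an $M$-module), whereas you obtain both halves at once from the explicit weight-space formula $(\Ind^{T^1}_M W)_a \cong W_{\bar a}$, which exhibits $\Ind^{T^1}_M$ as a direct sum of exact weight projections. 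Both are routine consequences of the diagonalizable-group toolkit of Section 2, so the two arguments are interchangeable in content; yours has the small advantage of making the shape of $\Ind^{T^1}_M$ visible by hand, while the paper's projectivity phrasing aligns more directly with the lifting-problem techniques reused elsewhere.
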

	\begin{proof}
		The left exactness follows since $I^{\fg_{n,m},T^1}_{\fq,M}$ is right adjoint. We show that it is also right exact. In view of Proposition \ref{cor:projgenofCcomod} and Proposition \ref{prop:compactpicture}, it will suffice to solve the lifting problem
		\[\xymatrix{&\Ind^{T^1}_M X\ar[d]^{\Ind^{T^1}_M(p)}\\
			k_\lambda\ar[r]\ar@{-->}[ru]&\Ind^{T^1}_MY,}\]
		where $p:X\to Y$ is a surjection of $M$-modules, and $\lambda\in\bZ$. This is equivalent to the following problem:
		\[\xymatrix{&X\ar[d]^p\\
			k_\lambda\ar[r]\ar@{-->}[ru]&Y.}\]
		The solution now exists from Proposition \ref{cor:projgenofCcomod}.
	\end{proof}
	\begin{cor}
		There is an isomorphism $\Ind^{T^1}_M\cong R(T^1)\otimes_{R(M)}-$. In particular, we obtain an isomorphism
		\[I^{\fg_{n,m},T^1}_{\fq,M}(-)\otimes_k k'\cong I^{\fg_{n,m}\otimes_k k',T^1\otimes_k k'}_{\fq\otimes_k k',M\otimes_k k'}(-\otimes_k k')\]
		for every homomorphism $k\to k'$ of $\bZ\left[1/2mn\right]$-algebras.
	\end{cor}
	The adjoint functor theorem also implies the following property:
	\begin{cor}
		The functor $I^{\fg_{n,m},T^1}_{\fq,M}$ admits a right adjoint functor.
	\end{cor}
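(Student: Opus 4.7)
The plan is to invoke the adjoint functor theorem in the setting of locally presentable abelian categories. By Corollary \ref{3.5}, both $(\fq,M)\cmod$ and $(\fg_{n,m},T^1)\cmod$ are isomorphic to categories of approximately unital modules over the Hecke algebras $R(\fq,M)$ and $R(\fg_{n,m},T^1)$; these are Grothendieck abelian and in particular locally presentable. For a functor between locally presentable abelian categories, admitting a right adjoint is equivalent to preservation of all small colimits, so the task reduces to verifying that $I^{\fg_{n,m},T^1}_{\fq,M}$ preserves small colimits.

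For this I would use the $2$-commutative square of Proposition \ref{4.4}, which gives a natural isomorphism
\[ U_{\fg_{n,m},T^1}\circ I^{\fg_{n,m},T^1}_{\fq,M}\cong \Ind^{T^1}_M\circ U_{\fq,M}, \]
where the $U$'s denote the forgetful functors to $T^1\cmod$ and $M\cmod$ respectively. The forgetful functor $U_{\fq,M}$ preserves all colimits, since colimits of $(\fq,M)$-modules are computed at the level of underlying $M$-modules. By Corollary \ref{4.6}, $\Ind^{T^1}_M\cong R(T^1)\otimes_{R(M)}-$ is a left adjoint and hence preserves all colimits. Thus $U_{\fg_{n,m},T^1}\circ I^{\fg_{n,m},T^1}_{\fq,M}$ preserves all small colimits.

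It will then remain to conclude that $I^{\fg_{n,m},T^1}_{\fq,M}$ itself preserves colimits. This follows because $U_{\fg_{n,m},T^1}$ both preserves all colimits and reflects isomorphisms: given a small diagram of $(\fq,M)$-modules, the canonical comparison map from the colimit of the $I$-image to the $I$-image of the colimit becomes an isomorphism after applying $U_{\fg_{n,m},T^1}$ by the previous paragraph, and therefore was an isomorphism to begin with. The adjoint functor theorem then yields the desired right adjoint. I do not anticipate any serious obstacle along this route; the only mild subtlety will be confirming local presentability of the relevant module categories, which is routine given the Hecke-algebra descriptions. A more hands-on alternative would be to construct a right adjoint explicitly via a Hom/coinvariants-type formula, but the abstract argument is considerably shorter once exactness and the $\Ind^{T^1}_M$ description are in hand.
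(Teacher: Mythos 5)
Your proposal is correct and is essentially the argument the paper has in mind: the remark preceding Corollary~\ref{4.7} (``the adjoint functor also implies the following property'') points exactly to the fact that, since $\Ind^{T^1}_M\cong R(T^1)\otimes_{R(M)}-$ is a left adjoint, the $2$-commutative square of Proposition~\ref{4.4} together with the conservativity and colimit-preservation of the forgetful functor shows that $I^{\fg_{n,m},T^1}_{\fq,M}$ preserves small colimits, whence the right adjoint exists by the adjoint functor theorem on the locally presentable module categories furnished by Corollary~\ref{3.5}. You spell out the colimit-reflection step more explicitly than the paper, but it is the same route.
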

	For $\epsilon\in\{0,\frac{1}{n},\frac{2}{n},\cdots,\frac{n-1}{n}\}$ and $\mu\in k$, define the structure of a $(\fq,M)$-module on $k$ by
	\[(-2mnE+F+2mH)\cdot 1=0;\]
	\[(2mnE+F)\cdot 1=\mu;\]
	\[t\cdot 1= t^{n\epsilon},\]
	where $t\in M$. We refer to it as $k_{n\epsilon,\mu}$.
	\begin{thm}[Fractional models of principal series representations]\label{thm:fractionalmodel}
		The $(\fg_{n,m},T^1)$-module $I^{\fg_{n,m},T^1}_{\fq,M}(k_{n\epsilon,\mu})$ is free over $k$. Moreover, there is a basis $\{w^{n(p+\epsilon)}:~p\in\bZ\}$ such that the action of $(\fg_{n,m},T^1)$ is given by
		\[Ew^{n(p+\epsilon)}=(\frac{1}{4mn}\mu+\frac{1}{2}(p+\epsilon))w^{n(p+1+\epsilon)};\]
		\[Fw^{n(p+\epsilon)}=(\frac{1}{2}\mu-mn(p+\epsilon))w^{n(p-1+\epsilon)};\]
		\[Hw^{n(p+\epsilon)}=n(p+\epsilon)w^{n(p+\epsilon)};\]
		\[t\cdot w^{n(p+\epsilon)}=t^{n(p+\epsilon)}w^{n(p+\epsilon)},\]
		where $t\in T^1$.
	\end{thm}
	\begin{proof}
		Proposition \ref{prop:compactpicture} and its proof imply that the restriction along $R(T^1)\to R(\fg_{n,m},T^1)$ gives rise to an isomorphism
		\[I^{\fg_{n,m},T^1}_{\fq,M}(k_{n\epsilon,\mu})=\Hom_{\fq,M}(R(\fg_{n,m},T^1),k_{n\epsilon,\mu})_{T^1}
		\cong\Hom_M(R(T^1),k_{n\epsilon})_{T^1}.\]
		Define $w^{n(p+\epsilon)}\in
		(\Hom_{\fq,M}(R(\fg_{n,m},T^1),k_{n\epsilon,\mu})_{T^1})_{n(p+\epsilon)}$ by
		\[w^{n(p+\epsilon)}(1\otimes p_\lambda)=\left\{\begin{array}{ll}
		1& (\lambda=n(p+\epsilon))\\
		0&({\rm otherwise}).
		\end{array} \right.\]
		Following Corollary \ref{cor:fundamentaltheoremofweakHeckealgebra}, we can compute the actions of $R(T^1)$, $E$ and $F$ as
		\[\begin{split}
		(p_\lambda w^{n(p+\epsilon)})(p_{\lambda'})
		&=w^{n(p+\epsilon)}(p_\lambda\cdot p_{\lambda'})\\
		&=\left\{\begin{array}{ll}
		1& (\lambda=\lambda'=n(p+\epsilon))\\
		0&({\rm otherwise}).
		\end{array} \right.
		\end{split}\]
		\[\begin{split}
		(Ew^{n(p+\epsilon)})(p_{n(p+1+\epsilon)})
		&=\sum_{\lambda\in\bZ}w^{n(p+\epsilon)}((1\otimes p_{n(p+1+\epsilon)})\cdot(E\otimes p_\lambda))\\
		&=w^{n(p+\epsilon)}(E\otimes p_{n(p+\epsilon)})\\
		&=w^{n(p+\epsilon)}((-\frac{1}{4mn}X+\frac{1}{4mn}Y+\frac{1}{2n}H)\otimes p_{n(p+\epsilon)})\\
		&=\frac{1}{4mn}\mu+\frac{1}{2}(p+\epsilon)
		\end{split}\]
		\[\begin{split}
		(Fw^{n(p+\epsilon)})(p_{n(p-1+\epsilon)})
		&=\sum_{\lambda\in\bZ}w^{n(p+\epsilon)}((1\otimes p_{n(p-1+\epsilon)})\cdot(F\otimes p_\lambda))\\
		&=w^{n(p+\epsilon)}(F\otimes p_{n(p+\epsilon)})\\
		&=w^{n(p+\epsilon)}((\frac{1}{2}X+\frac{1}{2}Y-mH)\otimes p_{n(p+\epsilon)})\\
		&=\frac{1}{2}\mu-mn(p+\epsilon)
		\end{split}\]
		This completes the proof.
	\end{proof}
	\begin{rem}
		The counit $I^{\fg_{n,m},T^1}_{\fq,M}(k_{n\epsilon,\mu})\to k_{n\epsilon,\mu}$ is given by $w^{n(p+\epsilon)}\mapsto 1$.
	\end{rem}
\begin{cor}\label{cor:casimir}
The element $m^2H^2 +mnEF+mnFE$ of the enveloping algebra of $\fg_{n,m}$ acts on $I^{\fg_{n,m},T^1}_{\fq,M}(k_{n\epsilon,\mu})$ by $\frac{1}{4}\mu(\mu-2mn)$.
\end{cor}
\begin{proof}
	This follows by a straightforward computation.
\end{proof}
For a digression, we discuss the conjugation operation.
In general, let $k\to k'$ be a flat homomorphism of commutative rings, $\sigma$ be a $k$-algebra automorphism of $k'$, and $(\fg,K)$ be a Harish-Chandra pair over $k$. Then we define the functor
\[(-)^\sigma:(\fg\otimes_k k',K\otimes_k k')\cmod
\to (\fg\otimes_k k',K\otimes_k k')\cmod\]
by the base change along $\sigma^{-1}$ and the canonical identification
\[((\fg\otimes_k k')\otimes_{k'} k',
(K\otimes_k k')\otimes_{k'} k')\cong
(\fg\otimes_k k',K\otimes_k k').\]

For a commutative ring $k$, define a $k$-algebra automorphism $\bar{\ }$ of $k\left[\sqrt{-1}\right]$ by $\overline{\sqrt{-1}}=-\sqrt{-1}$. We will call $\bar{\ }$ the conjugation. We shall discuss how the induced module of Theorem \ref{thm:fractionalmodel} behaves under the conjugation of $k\left[\sqrt{-1}\right]$. The flat base change theorem (\cite{MR3853058} Variant G (2)) shows
\[\overline{I^{\fg_{n,m},T^1}_{\fq,M}(
	k\left[\sqrt{-1}\right]_{n\epsilon,\mu})}
\cong I^{\fg_{n,m},T^1}_{\fq,M}(
k\left[\sqrt{-1}\right]_{n\epsilon,\bar{\mu}}).\]
\begin{prop}\label{prop:descent}
	\begin{enumerate}
		\renewcommand{\labelenumi}{(\arabic{enumi})}
		\item Suppose that $k$ is a field without $\sqrt{-1}$ such that the characteristic of $k$ does not divide $2mn$. Then the $(\fg_{n,m},T^1)$-module $I^{\fg_{n,m},T^1}_{\fq,M}(
		k\left[\sqrt{-1}\right]_{n\epsilon,\mu})$ over $k\left[\sqrt{-1}\right]$ is isomorphic to its conjugation if and only if $\mu-mn\in k\cup k\sqrt{-1}$. Moreover, if $\mu-mn\in k\cup k\sqrt{-1}$, $I^{\fg_{n,m},T^1}_{\fq,M}(
		k\left[\sqrt{-1}\right]_{n\epsilon,\mu})$ is defined over $k$.
		\item Suppose that $k=\bZ\left[1/2mn\right]$. Then the $(\fg_{n,m},T^1)$-module \[I^{\fg_{n,m},T^1}_{\fq,M}(
		k\left[\sqrt{-1}\right]_{n\epsilon,\mu})\]
		over $k\left[\sqrt{-1}\right]$ is isomorphic to its conjugation if and only if $\mu\in k$.
	\end{enumerate}
\end{prop}
For the proof of (2), we need elementary results on Gaussian integers:
\begin{lem}\label{lem:gaussianprime}
	\begin{enumerate}
		\renewcommand{\labelenumi}{(\arabic{enumi})}
		\item Let $\ell$ be a Gaussian prime such that $\ord_\ell (2mn)=0$. Then $\ord_\ell$ takes nonnegative values on $\bZ\left[1/2mn,\sqrt{-1}\right]$. In particular, we have $\ord_\ell$ is zero on $\bZ\left[1/2mn,\sqrt{-1}\right]^\times$.  
		\item Let $z$ and $w$ be nonzero elements of $\bZ\left[1/2mn,\sqrt{-1}\right]$. Then there exist $p\in\bZ$ and a Gaussian prime $\ell$ such that $\ord_\ell z=\ord_\ell (2mn)=0$ and $\ord_\ell (w-p)\neq 0$.
	\end{enumerate}
\end{lem}
\begin{proof}
	Part (1) is clear: let $\ell$ be a Gaussian prime with $\ord_\ell (2mn)=0$. Then every element of $\bZ\left[1/2mn,\sqrt{-1}\right]$ is expressed as $\frac{z}{(2mn)^a}$, where $z\in \bZ\left[\sqrt{-1}\right]$ and $a\in\bZ$. Since $\ord_\ell (2mn)=0$, we have
	\[\ord_\ell \frac{z}{(2mn)^a}=\ord_\ell z\geq 0.\]
	
	We next prove (2). Let $z$ and $w$ be nonzero elements of $\bZ\left[1/2mn,\sqrt{-1}\right]$. Let $N$ denote the norm of $\bQ(\sqrt{-1})/\bQ$. Let $\ell$ be a Gaussian prime whose norm $N(\ell)$ is a sufficiently large prime so that $\ord_\ell z=\ord_\ell (2mn)=0$. Choose a positive integer $a$ such that $(2mn)^a w\in\bZ\left[\sqrt{-1}\right]$. Since $\ell$ does not divide $2mn$, there exists a Gaussian integer $w'\in \bZ\left[\sqrt{-1}\right]$ such that $(2mn)^a w\equiv (2mn)^a w'\pmod \ell$.
	
	Since the canonical map $\bZ\to\bZ\left[\sqrt{-1}\right]/(\ell)$ is surjective, one can find an integer $p$ such that $w'\equiv p\pmod\ell$. The assertion now follows since
	 \[\begin{split}
	 	\ord_\ell(w-p)
	 	&=\ord_\ell (2mn)^a(w-p)\\
	 	&=\ord_\ell
	 	((2mn)^aw-(2mn)^a w'+(2mn)^aw'-(2mn)^ap)\\
	 	&\geq \min\{\ord_\ell ((2mn)^a w-(2mn)^a w'),
	 	\ord_\ell (2mn)^a (w'-p)\}\\
	 	&=\min\{\ord_\ell ((2mn)^a w-(2mn)^a w'),
	 	\ord_\ell (w'-p)\}\\
	 	&>0.
	 \end{split}\]
\end{proof}
\begin{proof}[Proof of Proposition \ref{prop:descent}]
	For (1), let $k$ be a field over $\bZ\left[1/2mn\right]$ without $\sqrt{-1}$. In particular, $k\left[\sqrt{-1}\right]$ is a field. It is clear that $I^{\fg_{n,m},T^1}_{\fq,M}(
	k\left[\sqrt{-1}\right]_{n\epsilon,\mu})$ is defined over $k$ if $\mu \in k$. Suppose that $\mu-mn\in k\sqrt{-1}$. We may assume $\mu-mn$ is nonzero. Then $\frac{1}{4mn}\mu+\frac{1}{2}(p+\epsilon)$ is nonzero for every integer $p$. We define a nonzero element $a_p$ of $k\left[\sqrt{-1}\right]$ for $p\in \bZ$ by $a_0=1$ and the recursion formula
	\[a_{p+1}=(\mu+2mn(p+\epsilon))a_p.\]
	Set $w^{n(p+\epsilon)}_r=a_p w^{n(p+\epsilon)}$. Then we have
	\[\begin{split}
		Ew^{n(p+\epsilon)}_r
		&=a_p Ew^{n(p+\epsilon)}\\
		&=a_p(\frac{1}{4mn}\mu+\frac{1}{2}(p+\epsilon))w^{n(p+1+\epsilon)}\\
		&=\frac{a_p}{a_{p+1}}(\frac{1}{4mn}\mu+\frac{1}{2}(p+\epsilon))w^{n(p+1+\epsilon)}_r\\
		&=\frac{1}{4mn}w^{n(p+1+\epsilon)}_r
	\end{split}\]
	\[\begin{split}
		Fw^{n(p+\epsilon)}_r
		&=a_p Fw^{n(p+\epsilon)}\\
		&=a_p(\frac{1}{2}\mu-mn(p+\epsilon))w^{n(p-1+\epsilon)}\\
		&=\frac{a_p}{a_{p-1}}(\frac{1}{2}\mu-mn(p+\epsilon))w^{n(p-1+\epsilon)}_r\\
		&=\frac{1}{2}(\mu+2mn(p-1+\epsilon))(\mu-2mn(p+\epsilon))w^{n(p+1+\epsilon)}_r
	\end{split}\]
	for $p\in\bZ$. Since $\mu-mn\in k\sqrt{-1}$,
	\begin{flalign*}
		&(\mu+2mn(p-1+\epsilon))(\mu-2mn(p+\epsilon))\\
		&=(\mu-mn+mn(2p-1+2\epsilon))(\mu-mn-mn(2p-1+2\epsilon))
	\end{flalign*}
	 belongs to $k$. This shows that $\oplus_{p\in\bZ} kw^{n(p+\epsilon)}_r$ is a $k$-form of $I^{\fg_{n,m},T^1}_{\fq,M}(
	k\left[\sqrt{-1}\right]_{n\epsilon,\mu})$.
	
	Conversely, suppose that $I^{\fg_{n,m},T^1}_{\fq,M}(
	k\left[\sqrt{-1}\right]_{n\epsilon,\mu})$ is isomorphic to its conjugation. Then Corollary \ref{cor:casimir} implies $\mu(\mu-2mn)=\bar{\mu}(\bar{\mu}-2mn)$. Arrange this equality to get $(\mu-\bar{\mu})(\mu+\bar{\mu}-2mn)=0$. Since $k\left[\sqrt{-1}\right]$ is a domain, at least one of the equalities $\mu=\bar{\mu}$ and $\mu+\bar{\mu}=2mn$ holds. The assertion $\mu-mn\in k\cup k\sqrt{-1}$ now follows since $2$ is a unit of $k$.
	
	We next prove (2). Put $k=\bZ\left[1/2mn\right]$. The ``if'' direction is obvious. Suppose that $I^{\fg_{n,m},T^1}_{\fq,M}(
	k\left[\sqrt{-1}\right]_{n\epsilon,\mu})$ is isomorphic to its conjugation. A similar argument to the ``only if'' direction of (1) shows at least one of the conditions $\mu-mn\in k\cup k\sqrt{-1}$ holds. 
	
	Suppose $\mu\in mn+k\sqrt{-1}\setminus{\{mn\}}$. Take an isomorphism
	\[f:I^{\fg_{n,m},T^1}_{\fq,M}(
	k\left[\sqrt{-1}\right]_{n\epsilon,\mu})
	\cong I^{\fg_{n,m},T^1}_{\fq,M}(
	k\left[\sqrt{-1}\right]_{n\epsilon,\bar{\mu}}).\]
	Then there exists a unit $a_p\in k\left[\sqrt{-1}\right]^\times$ for each $p\in\bZ$ such that
	\[f(w^{n(p+\epsilon)})=a_pw^{n(p+\epsilon)}.\]
	Compare the actions of $F$ to get
		\begin{equation}\label{eq:recursion}
			(\frac{1}{2}\mu-mn(p+\epsilon))
			a_{p-1}
			=(\frac{1}{2}\bar{\mu}-mn(p+\epsilon))a_p.
		\end{equation}
	In view of Lemma \ref{lem:gaussianprime} (2), we can find $p\in\bZ$ and a Gaussian prime $\ell$ such that
	\begin{subequations}\label{subeq:ord1}
		\begin{equation}
			\ord_\ell (2mn)=\ord_\ell(\mu-\bar{\mu})=0
		\end{equation}
		\begin{equation}
			\ord_\ell(\frac{1}{2mn}\mu-p-\epsilon)>0.
		\end{equation}
	\end{subequations}
	In particular, we obtain
	\begin{subequations}\label{subeq:ord2}
		\begin{equation}
			\ord_\ell(\frac{1}{2}\mu-mn(p+\epsilon))>0.
		\end{equation}
		\begin{equation}
			\ord_\ell(\frac{1}{2}\bar{\mu}-mn(p+\epsilon))
			=\ord_\ell(-\frac{1}{2}(\mu-\bar{\mu})+\frac{1}{2}\mu-mn(p+\epsilon))
			=0
		\end{equation}
	\end{subequations}
	from \eqref{subeq:ord1}. Finally, \eqref{subeq:ord2} contradicts to \eqref{eq:recursion} since $\ord_\ell a_p=\ord_\ell a_{p+1}=0$ (Lemma \ref{lem:gaussianprime} (1)).
\end{proof}
\begin{rem}
	Let $k$ be as in (1). Suppose that $\mu-mn$ is a nonzero element of $k\sqrt{-1}$. Then the $k$-form of $I^{\fg_{n,m},T^1}_{\fq,M}(
	k\left[\sqrt{-1}\right]_{n\epsilon,\mu})$ cannot be isomorphic to $I^{\fg_{n,m},T^1}_{\fq,M}(
	k_{n\epsilon',\mu'})$ for any $\epsilon\in\{0,\frac{1}{n},\frac{2}{n},\cdots,\frac{n-1}{n}\}$ and $\mu'\in k$. In fact, if such $\epsilon'$ and $\mu'$ exist, Corollary \ref{cor:casimir} implies $\mu(\mu-2mn)=\mu'(\mu'-2mn)$. Since $k\left[\sqrt{-1}\right]$ is a field, $\mu'$ is either $\mu$ or $2mn-\mu$. However, none of $\mu$ and $2mn-\mu$ belong to $k$.
\end{rem}
	Other new phenomena occur when $2mn$ is not invertible in $k$. Put $k=\bZ$. Take $\epsilon\in\{0,\frac{1}{n},\cdots,\frac{n-1}{n}\}$ and $\mu\in\bZ$. Define $\bZ_{n\epsilon,\mu}$ in a similar way.
	For each integer $\lambda\in\bZ$, we have a homomorphism
	\[\begin{split}
	\Hom_{T^1}(\bZ_\lambda,I^{\fg_{n,m},T^1}_{\fq,M}(\bZ_{n\epsilon,\mu}))
	&\cong\Hom_{\fg_{n,m},T^1}(\ind^{\fg_{n,m}}_{\ft^1} \bZ_\lambda,I^{\fg_{n,m},T^1}_{\fq,M}(\bZ_{n\epsilon,\mu}))\\
	&\cong\Hom_{\fq,M}({\rm ind}^{\fg_{n,m}}_{\ft^1} \bZ_\lambda,\bZ_{n\epsilon,\mu})\\
	&\to\Hom_M(\bZ_\lambda,\bZ_{n\epsilon})
	\end{split}\]
	which is compatible with the sequence of isomorphisms in the proof of Proposition \ref{prop:compactpicture}. Since the localization homomorphism $\bZ\to\bZ\left[1/2mn\right]$ is injective, $\Hom_{\fq,M}({\rm ind}^{\fg_{n,m}}_{\ft^1} \bZ_\lambda,\bZ_{n\epsilon,\mu})$ is bijective to the set of $M$-homomorphisms $\bZ_\lambda\to\bZ_{n\epsilon}$ which (uniquely) extend to a $(\fq,M)$-homomorphism ${\rm ind}^{\fg_{n,m}}_{\ft^1} \bZ_\lambda\to\bZ_{n\epsilon,\mu}$. For a more explicit description, we analyze the bijection
	\[\Hom_{\fq,M}({\rm ind}^{\fg_{n,m}}_{\ft^1} \bZ\left[1/2mn\right]_\lambda,\bZ\left[1/2mn\right]_{n\epsilon,\mu})
	\cong\Hom_M(\bZ\left[1/2mn\right]_\lambda,\bZ\left[1/2mn\right]_{n\epsilon}).\]
	We may assume that $\lambda$ is of the form $n(p+\epsilon)$ for some integer $p\in\bZ$; otherwise $\Hom_M(\bZ\left[1/2mn\right]_\lambda,\bZ\left[1/2mn\right]_{n\epsilon})=0$. Let \[\varphi\in\Hom_M(\bZ\left[1/2mn\right]_{n(p+\epsilon)},\bZ\left[1/2mn\right]_{n\epsilon}).\]
	In view of \eqref{eq:Iwasawa}, the extension is given by
	\[\begin{split}
	\varphi(E^{s+1}\otimes 1)
	&=\varphi((-\frac{1}{4mn}X+\frac{1}{4mn}Y+\frac{1}{2n}H)E^s\otimes 1)\\
	&=(\frac{1}{4mn}\mu+\frac{1}{2}(s+p+\epsilon))\varphi(E^s\otimes 1)
	\end{split}\]
	\[\begin{split}
	\varphi(F^{s+1}E^t\otimes 1)
	&=\varphi((\frac{1}{2}X+\frac{1}{2}Y-mH)(F^sE^t\otimes 1))\\
	&=(\frac{1}{2}\mu+mn(s-t-p-\epsilon))\varphi(F^sE^t\otimes 1)
	\end{split}\]
	for nonnegative integers $s,t\geq 0$.
	\begin{lem}\label{lem:elementarynumbertheory}
		We have
		\[\max \bigl\{\sum_{q=1}^{s}(1-\ord_2 q):~s\geq 0\bigr\}=\infty.\]
	\end{lem}
	\begin{proof}
		Put $s=2^a-1$ for some nonnegative integer $a$. Then
		\[\begin{split}
		\sum_{q=1}^{s}(1-\ord_2 q)
		&=2^a-1-\sum_{b=1}^\infty\left[\frac{2^a-1}{2^b}\right]\\
		&=2^a-1-\sum_{b=1}^{a}(2^{b-1}-1)\\
		&=a.
		\end{split}\]
		The assertion now follows.
	\end{proof}
	\begin{thm}\label{thm:partlyvanishing}
		The $(\fg_{n,m},T^1)$-module $I^{\fg_{n,m},T^1}_{\fq,M}(\bZ_{n\epsilon,\mu})$ is nonzero if and only if $\frac{1}{2mn}\mu+\epsilon\in\bZ$. Moreover, if $\frac{1}{2mn}\mu+\epsilon\in\bZ$, there is a canonical isomorphism
		\[I^{\fg_{n,m},T^1}_{\fq,M}(\bZ_{n\epsilon,\mu})=\oplus_{p\leq-\frac{1}{2mn}\mu-\epsilon}\bZ 2^{M_p}w^{n(p+\epsilon)}\subset I^{\fg_{n,m},T^1}_{\fq,M}(\bZ\left[1/2mn\right]_{n\epsilon,\mu}),\]
		where for each integer $p\leq-\frac{1}{2mn}\mu-\epsilon$,
		\[M_p=\max \bigl\{-\sum_{q=0}^{s}\ord_2(\frac{1}{4mn}\mu+\frac{1}{2}(q+p+\epsilon)):~0\leq s\leq 
		-(p+\frac{1}{2mn}\mu+\epsilon+1)\bigr\}\cup\{0\}.\]
	\end{thm}
	\begin{proof}
		Let $\varphi$ be a nonzero element of $\Hom_M(\bZ_{n(p+\epsilon)},\bZ_{n\epsilon,\mu})$. 
		Suppose that at least one of the following conditions fails:
		\begin{enumerate}
			\item[(i)] $\frac{1}{2mn}\mu+\epsilon\in\bZ$;
			\item[(ii)] $p\leq-\frac{1}{2mn}\mu-\epsilon$.
		\end{enumerate}
		Then $\varphi(E^s\otimes 1)$ never vanishes for $s\geq 0$. If (i) is satisfied, and (ii) fails then
		Lemma \ref{lem:elementarynumbertheory} implies
		\[\min \bigl\{\sum_{q=0}^{s-1}\ord_2(\frac{1}{4mn}\mu+\frac{1}{2}(q+p+\epsilon)):~s\geq 0\bigr\}=-\infty.\]
		Suppose that (i) fails. Then there exists a prime number $\ell$ such that $\ord_\ell(\frac{1}{2mn}\mu+\epsilon)<0$. This implies that for any $q\geq 0$,
		\[\begin{split}
		\ord_\ell(\frac{1}{2}(\frac{1}{2mn}\mu+(q+p+\epsilon)))
		&=-\ord_\ell 2+\ord_\ell(\frac{1}{2mn}\mu+(q+p+\epsilon))\\
		&=-\ord_\ell 2
		+\ord_\ell(\frac{1}{2mn}\mu+\epsilon)<0,
		\end{split}\]
		and
		\[\min \bigl\{\sum_{q=0}^{s-1}\ord_\ell
		(\frac{1}{4mn}\mu+\frac{1}{2}(q+p+\epsilon)):~s\geq 0\bigr\}=-\infty.\]
		Hence $\varphi$ never extends to a $(\fq,M)$-homomorphism $\ind^{\fg_{n,m}}_{\ft^1}\bZ_{n(p+\epsilon)}\to\bZ_{n\epsilon,\mu}$.
		
		If (i) and (ii) are satisfied, $s_0=-\frac{1}{2mn}\mu-\epsilon-p$ is a nonnegative integer, and $\varphi(E^{s_0+1}\otimes 1)=0$.
		Since $\mu$ is even from (i),
		\[\varphi(F^{s+1}E^t\otimes 1)\in\bZ\varphi(F^sE^t\otimes 1)\]
		for all $s,t\geq 0$. Hence $\varphi$ has an extension if and only if $\varphi(E^{s'}\otimes 1)\in\bZ$ for $1\leq s'\leq s_0$ in this case. It is characterized by
		\[\ord_2\varphi(1)+\sum_{q=0}^{s'-1}\ord_2(\frac{1}{4mn}\mu+\frac{1}{2}(q+p+\epsilon))\geq 0\]
		for all $s'$. This completes the proof.
	\end{proof}
	\begin{rem}\label{rem:choiceofq}
		There are other choices of $\bZ$-forms of $\fq_\bC$. In fact, a $\bZ$-form is determined by 
		the "Levi part" and a submodule of $\bZ(-2mnE+F+2mH)$ as a nilradical. For instance, the maximal $\bZ$-form is
		\[\bZ(-2mnE+F+2mH)\oplus\bZ(-2nE+H).\]
		For each choice, we can think of $\mu\in\bZ$ as the parameter $\mu\in\bC$ for $(\fq_\bC,T^1)\subset(\fsl_2,T^1)$ via $\alpha$. In other words, to fix a $\bZ$-form of $\fq_\bC$ is to fix a $\bZ$-form $\bZ_{n\epsilon,\mu}$ of the $(\fq_\bC,T^1)$-module $\bC_{n\epsilon,\mu}$ for $\mu\in\bC$ in our context. Formally, Theorem \ref{thm:partlyvanishing} is independent of the choice of $\bZ$-forms except the coefficient of $\mu$.
	\end{rem}

	We can obtain similar results by replacing the realization homomorphism $\alpha_{\frac{1}{2}}$. Let us think of $(\fg_{n,m},T^1,\alpha_{mn})$. Set
	\[\fq'=\bZ (-E+2mnF+2mH)\oplus \bZ (E+2mnF).\]
	Then $\fq'$ and $M$ form a subpair of $(\fg_{n,m},T^1)$. Define a $(\fq',M)$-module $k_{n\epsilon,\mu}$ in a similar way for a $\bZ\left[1/2mn\right]$-algebra $k$. Then one can obtain the following results by similar computations:
	\begin{var}\label{var:fractionalmodel1}
		Let $k$ be a $\bZ\left[1/2mn\right]$-algebra, $\mu\in k$, and $\epsilon\in\{0,\frac{1}{n},\frac{2}{n},\cdots,\frac{n-1}{n}\}$. Then the $(\fg_{n,m},T^1)$-module $I^{\fg_{n,m},T^1}_{\fq',M}(k_{n\epsilon,\mu})$ is free over $k$. Moreover, there is a basis $\{(w')^{n(p+\epsilon)}:~p\in\bZ\}$ such that the action of $(\fg_{n,m},T^1)$ is given by
		\[E(w')^{n(p+\epsilon)}=(\frac{1}{2}\mu+mn(p+\epsilon))(w')^{n(p+1+\epsilon)};\]
		\[F(w')^{n(p+\epsilon)}=(\frac{1}{4mn}\mu-\frac{1}{2}(p+\epsilon))(w')^{n(p-1+\epsilon)};\]
		\[H(w')^{n(p+\epsilon)}=n(p+\epsilon)(w')^{n(p+\epsilon)};\]
		\[t\cdot (w')^{n(p+\epsilon)}=t^{n(p+\epsilon)}(w')^{n(p+\epsilon)},\]
		where $t\in T^1$. In particular, $m^2H^2 +mnEF+mnFE$ acts on $I^{\fg_{n,m},T^1}_{\fq',M}(k_{n\epsilon,\mu})$ by $\frac{1}{4} \mu (\mu - 2 m n)$.
	\end{var}
\begin{var}\label{var:descent1}
	\begin{enumerate}
		\renewcommand{\labelenumi}{(\arabic{enumi})}
		\item Suppose that $k$ is a field without $\sqrt{-1}$ such that the characteristic of $k$ does not divide $2mn$. Then the $(\fg_{n,m},T^1)$-module $I^{\fg_{n,m},T^1}_{\fq',M}(
		k\left[\sqrt{-1}\right]_{n\epsilon,\mu})$ over $k\left[\sqrt{-1}\right]$ is isomorphic to its conjugation if and only if $\mu-mn\in k\cup k\sqrt{-1}$. Moreover, if $\mu-mn\in k\cup k\sqrt{-1}$, $I^{\fg_{n,m},T^1}_{\fq',M}(
		k\left[\sqrt{-1}\right]_{n\epsilon,\mu})$ is defined over $k$.
		\item Suppose that $k=\bZ\left[1/2mn\right]$. Then the $(\fg_{n,m},T^1)$-module \[I^{\fg_{n,m},T^1}_{\fq',M}(
		k\left[\sqrt{-1}\right]_{n\epsilon,\mu})\]
		over $k\left[\sqrt{-1}\right]$ is isomorphic to its conjugation if and only if $\mu\in k$.
	\end{enumerate}
\end{var}
	We can also define a $(\fq',M)$-module $\bZ_{n\epsilon,\mu}$ as well.
	\begin{var}\label{var:partlyvanishing}
		Let $\epsilon\in\{0,\frac{1}{n},\frac{2}{n},\cdots,\frac{n-1}{n}\}$ and $\mu\in\bZ$. Then $I^{\fg_{n,m},T^1}_{\fq',M}(\bZ_{n\epsilon,\mu})$ is nonzero if and only if $\frac{1}{2mn}\mu-\epsilon\in\bZ$.
		Moreover, if $\frac{1}{2mn}\mu-\epsilon\in\bZ$ then there is a canonical isomorphism
		\[I^{\fg_{n,m},T^1}_{\fq',M}(\bZ_{n\epsilon,\mu})=\oplus_{p\geq\frac{1}{2mn}\mu-\epsilon}\bZ 2^{N_p}(w')^{n(p+\epsilon)},\]
		where for each integer $p\geq\frac{1}{2mn}\mu-\epsilon$,
		\[N_p=\max \bigl\{-\sum_{q=0}^{s}\ord_2(\frac{1}{4mn}\mu+\frac{1}{2}(q-p-\epsilon)):~0\leq s\leq 
		p-\frac{1}{2mn}\mu+\epsilon-1\bigr\}\cup\{0\}.\]
	\end{var}
	Consider $(\fg_{n,2n},T^1,\alpha_n)$.
	Set
	\[\fq''=\bZ(-E+F+2H)\oplus\bZ(E+F).\]
	Then we define $\bZ_{n\epsilon,\mu}$ and $k_{n\epsilon,\mu}$ for a $\bZ\left[1/2n\right]$-algebra\footnote{More generally, we can work with a $\bZ\left[1/2\right]$-algebra $k$ such that $n$ is regular in $k$.} $k$ in a similar way again.
	\begin{var}\label{var:fractionalmodel2}
		Let $k$ be a $\bZ\left[1/2n\right]$-algebra, $\mu\in k$, and $\epsilon\in\{0,\frac{1}{n},\frac{2}{n},\cdots,\frac{n-1}{n}\}$. Then $(\fg_{n,2n},T^1)$-module $I^{\fg_{n,2n},T^1}_{\fq'',M}(k_{n\epsilon,\mu})$ is free over $k$. Moreover, there is a basis $\{(w'')^{n(p+\epsilon)}:~p\in\bZ\}$ such that the action of $(\fg_{n,2n},T^1)$ is given by
		\[E(w'')^{n(p+\epsilon)}=(\frac{1}{2}\mu+n(p+\epsilon))(w'')^{n(p+1+\epsilon)};\]
		\[F(w'')^{n(p+\epsilon)}=(\frac{1}{2}\mu-n(p+\epsilon))(w'')^{n(p-1+\epsilon)};\]
		\[H(w'')^{n(p+\epsilon)}=n(p+\epsilon)(w'')^{n(p+\epsilon)};\]
		\[t\cdot (w'')^{n(p+\epsilon)}=t^{n(p+\epsilon)}(w'')^{n(p+\epsilon)},\]
		where $t\in T^1$. In particular, $2H^2+EF+FE$ acts on $I^{\fg_{n,2n},T^1}_{\fq'',M}(k_{n\epsilon,\mu})$ by\[\frac{1}{2}\mu (\mu-2n).\]
	\end{var}
\begin{var}\label{var:descent2}
	\begin{enumerate}
		\renewcommand{\labelenumi}{(\arabic{enumi})}
		\item Suppose that $k$ is a field without $\sqrt{-1}$ such that the characteristic of $k$ does not divide $2n$. Then the $(\fg_{n,2n},T^1)$-module \[I^{\fg_{n,2n},T^1}_{\fq'',M}(
		k\left[\sqrt{-1}\right]_{n\epsilon,\mu})\]
		over $k\left[\sqrt{-1}\right]$ is isomorphic to its conjugation if and only if
		\[\mu-n\in k\cup k\sqrt{-1}.\]
		Moreover, if $\mu-n\in k\cup k\sqrt{-1}$, $I^{\fg_{n,2n},T^1}_{\fq'',M}(
		k\left[\sqrt{-1}\right]_{n\epsilon,\mu})$ is defined over $k$.
		\item Suppose that $k=\bZ\left[1/2n\right]$. Then the $(\fg_{n,2n},T^1)$-module \[I^{\fg_{n,2n},T^1}_{\fq'',M}(
		k\left[\sqrt{-1}\right]_{n\epsilon,\mu})\]
		over $k\left[\sqrt{-1}\right]$ is isomorphic to its conjugation if and only if $\mu\in k$.
	\end{enumerate}
\end{var}
\begin{rem}
	Variant \ref{var:descent1} and Variant \ref{var:descent2} formally follow from Proposition \ref{prop:descent} by the isomorphisms \[I^{\fg_{n,m},T^1}_{\fq,M}(
	k_{n\epsilon,\mu})\cong I^{\fg_{n,m},T^1}_{\fq',M}(
	k_{n\epsilon,\mu});~w^{n(p+\epsilon)}\mapsto (2mn)^p (w')^{n(p+\epsilon)}\]
	\[I^{\fg_{n,2n},T^1}_{\fq,M}(
	k_{n\epsilon,2n\mu})\cong I^{\fg_{n,2n},T^1}_{\fq'',M}(
	k_{n\epsilon,\mu});~w^{n(p+\epsilon)}\mapsto (2n)^p (w'')^{n(p+\epsilon)}.\]
\end{rem}
	\begin{var}
		Let $\epsilon\in\{0,\frac{1}{n},\frac{2}{n},\cdots,\frac{n-1}{n}\}$ and $\mu\in\bZ$. Then $I^{\fg_{n,2n},T^1}_{\fq'',M}(\bZ_{n\epsilon,\mu})$ vanishes if and only if $\mu$ is odd. Moreover, if $\mu$ is even, there is a canonical isomorphism
		\[I^{\fg_{n,2n},T^1}_{\fq'',M}(\bZ_{n\epsilon,\mu})\cong\oplus_{p\in\bZ}\bZ(w'')^{n(p+\epsilon)}.\]
	\end{var}
	Note that we have recurrence formulas for the cases of $\fq'$, $\fq''$:
	\[\varphi(F^{s+1}\otimes 1)=(\frac{1}{4mn}\mu+\frac{1}{2}(s-p-\epsilon))\varphi(F^s\otimes 1)\]
	\[\varphi(E^{s+1}F^t\otimes 1)=(\frac{1}{2}\mu+mn(s-t+p+\epsilon))\varphi(E^sF^t\otimes 1)\]
	\[\varphi(F^{s+1}\otimes 1)=(\frac{1}{2}\mu+n(s-p-\epsilon))\varphi(F^s\otimes 1)\]
	\[\varphi(E^{s+1}F^t\otimes 1)=(\frac{1}{2}\mu+n(s-t+p+\epsilon))\varphi(E^sF^t\otimes 1).\]
	
	Though we cannot take the base change from $\bZ$ to the finite field $\bF_2$ since $M$ is singular, we can find another setting in an appropriate sense.
	\begin{ex}
		Put $n=2$, $m=1$, and $c=1$ in Theorem \ref{thm:classification}. Set
		\[\fq=\bZ(-E+F+H)\oplus\bZ(E+F).\]
		Then $\fq$ and $M$ eventually form a subpair of $(\fg_{2,1},T^1)$ over $\bF_2$-algebras $k$. We regard $(\fg_{2,1},T^1)$ and $(\fq,M)$ as Harish-Chandra pairs over $k$. Recall that we have a right adjoint functor $I^{\fg,K}_{\fq,M,w}:(\fq,M)\cmod_w\to(\fg,K)\cmod_w$ (see the proof of \cite{MR4007195} Theorem 2.2.8). Since $M\to T^1$ induces an isomorphism of their Lie algebras, the diagram
		\[\xymatrix{
			(\fq,M)\cmod\ar[r]^{I^{\fg,K}_{\fq,M}}\ar[d]_{\cJ_{\fq,M}}&(\fg,K)\cmod\ar[d]^{\cJ_{\fg,K}}\\
			(\fq,M)\cmod_w\ar[r]^{I^{\fg,K}_{\fq,M,w}}&(\fg,K)\cmod_w
		}\]
		is 2-commutative, where the vertical arrows are the canonical embeddings. In particular, we obtain $I^{\fg,T^1}_{\fq,M}(k_{n\epsilon,\mu})\cong\Hom_{\fq,M}(U(\fg)\otimes_k R(T^1),k_{n\epsilon,\mu})_{T^1}$ from Corollary \ref{cor:fundamentaltheoremofweakHeckealgebra}.
	\end{ex}
	\section{Variants for contraction families}
	In this section, we discuss a contraction analog of computations in the preceding sections. Let us review the contraction families introduced by \cite{10.1093/imrn/rny146} and \cite{10.1093/imrn/rny147}. Let $(\fg,K)$ be a Harish-Chandra pair over $\bC$, equipped with a $K$-equivariant involution $\theta$ of $\fg$. Write $\fg^{\theta=1}$ (resp.\ $\fg^{\theta=-1}$) for the eigenspace of $\theta$ with eigenvalue $1$ (resp.\ $-1$). Assume that $\fk$ maps to $\fg^{\theta=1}$ via the structure homomorphism $\fk\to\fg$. Then $\bfg=\fg\otimes_\bC \bC\left[z\right]$ is a Lie algebra over $\bC\left[z\right]$ for the bracket defined summandwisely by
	\[\left[\eta z^m,\xi z^n\right]=\left\{\begin{array}{ll}
	\left[\eta,\xi\right]z^{m+n+1}& (\eta,\xi\in\fg^{\theta=-1})\\
	\left[\eta,\xi\right]z^{m+n}&({\rm otherwise}).
	\end{array} \right.\]
	The Harish-Chandra pair $(\bfg,K\otimes_\bC \bC\left[z\right])$ over $\bC\left[z\right]$ is called a contraction family.
	
	Fix a positive integer $n$, and consider the Harish-Chandra pair $(\fsl_2,T^1)$ over $\bC$ associated to the $n$-cover of PU(1,1). In view of naturality of the construction of contraction families, the $\theta$-stable parabolic subpairs $(\fb_\bC,T^1)$ and $(\bar{\fb}_\bC,T^1)$ of $(\fsl_2,T^1)$ extend to subpairs
	\[(\bfb_\bC,T^1),\ (\bar{\bfb}_\bC,T^1)\subset(\bfsl_2,T^1).\]
	The following result is deduced from similar computations to Theorem \ref{thm:thetastableinduction}:
	\begin{thm}
		Let $\lambda$ be an integer. Then we have the following descriptions:
		\[\ind^{\bfsl_2}_{\bar{\bfb}_\bC}(\bC\left[z\right]_\lambda)
		=\oplus_{p\geq0} \bC\left[z\right]y_{\lambda+np}\]
		\[\pro^{\bfsl_2}_{\fb_\bC}(\bC\left[z\right]_\lambda)
		=\oplus_{p\geq 0}\bC\left[z\right]y^{\lambda+np}\]
		\[y_{\lambda-n}=0\]
		\[\left(\begin{array}{cc}
		0&1\\
		0&0\\
		\end{array}
		\right)y_{\lambda+np}=y_{\lambda+n(p+1)};\]
		\[\left(\begin{array}{cc}
		0&0\\
		1&0\\
		\end{array}
		\right)y_{\lambda+np}=-\frac{1}{n}pz(np-n+2\lambda)y_{\lambda-n(p-1)};\]
		\[\left(\begin{array}{cc}
		1&0\\
		0&-1\\
		\end{array}
		\right)y_{\lambda+np}=\frac{2}{n}(\lambda+np)y_{\lambda+np}\]
		\[t\cdot y_{\lambda+np}=t^{\lambda+np}y_{\lambda+np};\]
		\[y^{\lambda-n}=0;\]
		\[\left(\begin{array}{cc}
		0&1\\
		0&0\\
		\end{array}
		\right)y^{\lambda+np}=-\frac{z}{n}(p+1)(np+2\lambda)y^{m+n(p+1)};\]
		\[\left(\begin{array}{cc}
		0&0\\
		1&0\\
		\end{array}
		\right)y^{\lambda+np}=y^{\lambda+n(p-1)};\]
		\[\left(\begin{array}{cc}
		1&0\\
		0&-1\\
		\end{array}
		\right)y^{\lambda+np}=\frac{2}{n}(\lambda+np)y^{\lambda+np};\]
		\[t\cdot y^{\lambda+np}=t^{\lambda+np}y^{\lambda+np},\]
		where $t\in T^1$.
	\end{thm}
	On the other hand, the counterpart of real parabolic induction and principal series representations is nontrivial since $\fq_\bC$ is not $\theta$-stable. We suggest the
	$\bC\left[z\right]$-submodule $\bfq$ of $\bfsl_2$ spanned by the two elements
	\[X=\left(\begin{array}{cc}
	z&-z\\
	1&-z\\
	\end{array}
	\right)\]
	\[Y=\left(\begin{array}{cc}
	0&z\\
	1&0\\
	\end{array}
	\right)\]
	as a contraction model of $\fq$. It gives rise to a subpair $(\bfq,M_\bC\otimes_\bC\bC\left[z\right])$. Note that a similar issue to Theorem \ref{thm:partlyvanishing} appears in this setting. Namely, the Iwasawa decomposition only holds over $\bC\left[z^{\pm 1}\right]$:
	\[\bfq\oplus\ft^1\cong\bfsl_2;\]
	\[\left(\frac{-b+cz}{2z}X+\frac{b+cz}{2z}Y,
	\frac{2a+b-zc}{2}\left(\begin{array}{cc}
	1&0\\
	0&-1\\
	\end{array}
	\right)\right)\mapsto
	\left(\begin{array}{cc}
	a&b\\
	c&-a\\
	\end{array}
	\right).\]
	Notice also that $\bC\left[z\right]X$ is an ideal of $\bfq$ so that the projection $\bfq\to\bC\left[z\right]Y$ is a Lie algebra homomorphism. For $\epsilon\in\{0,\frac{1}{n},\frac{2}{n},\cdots,\frac{n-1}{n}\}$ and $\mu\in\bC\left[z^{\pm 1}\right]$, $\bC\left[z^{\pm 1}\right]=\bC\left[z^{\pm 1}\right]_{n\epsilon,\mu}$ is a $(\bfq,M_\bC\otimes_\bC\bC\left[z\right])$-module for
	\[X\cdot 1=0;\]
	\[Y\cdot 1=\mu;\]
	\[t\cdot 1= t^{n\epsilon},\]
	where $t\in M_\bC\otimes_\bC\bC\left[z\right]$. Notice that if $\mu\in\bC\left[z\right]$, it restricts to $\bC\left[z\right]$, which the resulting module will be denoted by $\bC\left[z\right]_{n\epsilon,\mu}$. We define a $(\bfq,M_\bC\otimes_\bC\bC\left[\sqrt{z}^{\pm 1}\right])$-module $\bC\left[\sqrt{z}^{\pm 1}\right]_{n\epsilon,\mu}$ for $\epsilon\in\{0,\frac{1}{n},\frac{2}{n},\cdots,\frac{n-1}{n}\}$ and $\mu\in\bC\left[\sqrt{z}^{\pm 1}\right]$ in a similar way. Let $\bar{\ }$ denote the $\bC\left[z^{\pm 1}\right]$-algebra automorphism of $\bC\left[\sqrt{z}^{\pm 1}\right]$ defined by $\sqrt{z}\mapsto -\sqrt{z}$ which will be called the conjugation. Taking the above remarks into account, one can prove the following result:
	\begin{thm}\label{thm:contractionvanishing}
		\begin{enumerate}
			\renewcommand{\labelenumi}{(\arabic{enumi})}
			\item There is a free
			$\bC\left[z^{\pm 1}\right]$-basis
			\[\{w^{n(p+\epsilon)}:~p\in\bZ\}\]
			of $I^{\bfsl_2,T^1}_{\bfq,M_\bC\otimes_\bC\bC\left[z\right]}(\bC\left[z^{\pm 1}\right]_{n\epsilon,\mu})$ on which $\bfsl_2$ and $T^1$ act as
			\[\left(\begin{array}{cc}
			0&1\\
			0&0\\
			\end{array}
			\right)w^{n(p+\epsilon)}=(\frac{1}{2z}\mu+p+\epsilon)w^{n(p+1+\epsilon)};\]
			\[\left(\begin{array}{cc}
			0&0\\
			1&0\\
			\end{array}
			\right)w^{n(p+\epsilon)}=(\frac{1}{2}\mu-z(p+\epsilon))w^{n(p-1+\epsilon)};\]
			\[\left(\begin{array}{cc}
			1&0\\
			0&-1\\
			\end{array}
			\right)w^{n(p+\epsilon)}=2(p+\epsilon)w^{n(p+\epsilon)};\]
			\[t\cdot w^{n(p+\epsilon)}=t^{n(p+\epsilon)}w^{n(p+\epsilon)},\]
			where $t\in T^1$.
			\item For $\mu\in\bC\left[z\right]\setminus z\bC\left[z\right]$, $I^{\bfsl_2,T^1}_{\bfq,M_\bC\otimes_\bC\bC\left[z^{\pm 1}\right]}(\bC\left[z^{\pm 1}\right]_{n\epsilon,\mu})$ vanishes.
			\item Assume $\mu\in z\bC\left[z\right]$. Then $\oplus_{p\in\bZ}\bC\left[z\right]w^{n(p+\epsilon)}$ is a $(\bfsl_2,T^1)$-submodule of $I^{\bfsl_2,T^1}_{\bfq,M_\bC\otimes_\bC\bC\left[z\right]}(\bC\left[z^{\pm 1}\right]_{n\epsilon,\mu})$ over $\bC\left[z\right]$. Moreover, we have an isomorphism
			\[\oplus_{p\in\bZ}\bC\left[z\right]w^{n(p+\epsilon)}\cong I^{\bfsl_2,T^1}_{\bfq,M_\bC\otimes_\bC\bC\left[z\right]}(\bC\left[z\right]_{n\epsilon,\mu}).\]
			\item For $\epsilon\in\{0,\frac{1}{n},\frac{2}{n},\cdots,\frac{n-1}{n}\}$ and $\mu\in\bC\left[\sqrt{z}^{\pm 1}\right]$, \[I^{\bfsl_2,T^1}_{\bfq,M_\bC\otimes_\bC\bC\left[\sqrt{z}^{\pm 1}\right]}(\bC\left[\sqrt{z}^{\pm 1}\right]_{n\epsilon,\mu})\]
			is isomorphic to its conjugation if and only if $\mu\in\bC\left[z^{\pm 1}\right]$.
		\end{enumerate}
	\end{thm}
	\begin{rem}
		Theorem \ref{thm:contractionvanishing} (3) implies the base change formula 
		\[I^{\bfsl_2,T^1}_{\bfq,M_\bC\otimes_\bC\bC\left[z\right]}(\bC\left[z\right]_{n\epsilon,\mu})\otimes_{\bC\left[z\right]}\bC\left[z^{\pm 1}\right]
		\cong I^{\bfsl_2,T^1}_{\bfq,M_\bC\otimes_\bC\bC\left[z^{\pm 1}\right]}(\bC\left[z^{\pm 1}\right]_{n\epsilon,\mu})\]
		for $\mu\in z\bC\left[z\right]$ without the conditions of \cite{MR3853058} Theorem 3.1.7.
	\end{rem}
	\begin{rem}
		The $(\bfsl_2,T^1)$-module $I^{\bfsl_2,T^1}_{\bfq,M_\bC\otimes_\bC\bC\left[z^{\pm 1}\right]}(\bC\left[z^{\pm 1}\right]_{n\epsilon,\mu})$ is generically irreducible in the sense that its base change to the algebraic closure $\overline{\bC(z)}$ is irreducible (see \cite{10.1093/imrn/rny146}, \cite{10.1093/imrn/rny147}) if and only if $\mu$ does not belong to $2z\bZ\pm 2z\epsilon$.
	\end{rem}
	\begin{rem}\label{rem:howtocontract}
		The construction of $\bfq$ is generalized in the following way: suppose that we are given a complex reductive Lie algebra $\fg$ and a parabolic subalgebra $\fq$ with abelian nilradical $\fu$. Fix a Cartan subalgebra contained in $\fq$. Then we obtain a Levi decomposition $\fq=\fl\oplus\fu$. Write $\bar{\fu}$ for the nilradical of the opposite parabolic subalgebra to $\fq$. Then $\fg$ and $\fl$ form a symmetric pair for the involution $\theta$ defined by
		\[\theta(x)=\left\{\begin{array}{ll}
		x& (x\in\fl)\\
		-x&(x\in\fu\oplus\bar{\fu}).
		\end{array} \right.\]
		Let $\bfg$ be the associated contraction family. Then we have an isomorphism
		\[\Phi:\fg\otimes_\bC\bC\left[z^{\pm 1}\right]\cong\bfg\otimes_{\bC\left[z\right]}\bC\left[z^{\pm 1}\right];\]
		\[\Phi(x)=\left\{\begin{array}{ll}
		x& (x\in\fq)\\
		z^{-1}x&(x\in\bar{\fu})
		\end{array} \right.\]
		of Lie algebras over $\bC\left[z^{\pm 1}\right]$.
		
		For a Lie subalgebra $\fa\subset\fg$, the image $\Phi(z\bC\left[z\right]\fa)$ is a Lie subalgebra of $\bfg$. Passing to this isomorphism, we can identify Theorem \ref{thm:contractionvanishing} (1) with Theorem \ref{thm:fractionalmodel}. More specifically, suppose that there is a semidirect decomposition $\fa=\fa_s\oplus\fa_n$ with $\fa_n$ being an ideal, and that the subalgebra $\fa_s$ is contained in $\fq$. Then we can find a larger subalgebra $\Phi(\bC\left[z\right]\fa_s\oplus z\bC\left[z\right]\fa_n)$.
	\end{rem}
	\begin{rem}
		In the previous section, we considered Harish-Chandra pairs $(\fg_{n,m},T^1)$. From the perspective of contraction families, we can regard each of them as the fiber of the contraction family over $\bZ\left[z\right]$ associated to $(\fg_{n,1},T^1)$ at $z=m$. Following this idea, we can think that $\fq,\fq'\subset\fg_{n,m}$ are obtained from the construction of Remark \ref{rem:howtocontract}. The maximal $\bZ$-form in Remark \ref{rem:choiceofq} is obtained by the latter construction in Remark \ref{rem:howtocontract}.
	\end{rem}

\end{document}